\documentclass[11pt]{article}
\usepackage[margin=1in]{geometry}
\usepackage{graphicx}
\usepackage{enumerate}
\usepackage{amsmath}
\usepackage{amssymb}
\usepackage[colorlinks=true,allcolors=black]{hyperref}
\usepackage{xcolor}
\usepackage{amsthm}
\usepackage[round]{natbib}
\DeclareMathOperator*{\argmin}{arg\,min}
\allowdisplaybreaks
\newtheorem{theorem}{Theorem}
\newtheorem{proposition}[theorem]{Proposition}
\newtheorem{lemma}[theorem]{Lemma}
\newtheorem{definition}[theorem]{Definition}
\newtheorem{assumption}[theorem]{Assumption}

\newcommand{\tr}{\mathrm{Tr}}
\newcommand{\cL}{\mathcal L}
\newcommand{\cA}{\mathcal A}
\newcommand{\cP}{\mathcal P}
\newcommand{\cN}{\mathcal N}
\newcommand{\cF}{\mathcal F}
\newcommand{\cE}{\mathcal E}
\newcommand{\cR}{\mathcal R}
\newcommand{\bH}{\mathbf{H}}
\newcommand{\bV}{\mathbf{V}}
\newcommand{\R}{\mathbb R}
\newcommand{\E}{\mathbb E}
\newcommand{\Fisher}{\mathcal I}
\newcounter{shpialgorithm}
\begin{document}
\title{Amortized Score–Hamiltonian Policy Iteration: A Grid-Free Scheme for Relaxed Stochastic Control Problems}
\author{
Qi Feng\thanks{\noindent Department of
Mathematics, Florida State University, Tallahassee, 32306; email: qfeng2@fsu.edu. This author is partially supported by the National Science Foundation under grant \#DMS-2420029.}
~ and ~ Gu Wang\thanks{ \noindent Department of
Mathematical Sciences, Worcester Polytechnic Institute, Worcester, MA 01609;
email: gwang2@wpi.edu. This author is partially supported
by National Science Foundation grant \#DMS - 2206282.}
}
\date{}
\maketitle
\begin{abstract}
We develop an amortized, grid-free implementation of continuous Langevin dynamics based policy-value iteration for entropy-regularized, infinite-horizon relaxed stochastic control problems. The improvement rate of the exact iteration is a discounted aggregate of relative Fisher information between the policy and the Gibbs law of its Hamiltonian. The associated score residual is the velocity with which the control's Langevin dynamics transport its law. 
 We project this velocity onto a conditional sampler
shared across states, instead of one Langevin dynamics per state, and the value dynamics
onto a parametric critic, estimating both projections at sampled states to obtain coupled actor--critic flows.
The score loss measures the actor's agreement with the current critic, while the policy-evaluation residual measures the critic's agreement with the actor.  We also derive gradient and Hessian residuals, including a Feynman--Kac representation for the gradient equation, to control errors not detected by the projected value iteration. An exact decomposition of the HJB residual combines these errors into a policy-suboptimality bound under verification and logarithmic Sobolev assumptions. In the linear-quadratic class, both projections are exact and recover the pointwise iteration, and we provide numerical experiments on general models to demonstrate the coupled actor--critic learning in high-dimensions.
\end{abstract}
{$\mathbf{Keywords}$.} Hamilton-Jacobi-Bellman equation; Reinforcement learning; Policy iteration; Score matching; Diffusion policy; Relaxed control; Langevin dynamics.

{\textit{2000 AMS Mathematics subject classification}: 93E20, 93E35, 60H10, 68T05}

\section{Introduction}\label{sec: intro}
Policy iteration algorithms, in which the policy of a stochastic control problem and its value are improved alternately, have recently received renewed attention in continuous-time models, in particular within the relaxed control framework \citep{fleming1984relaxed, zhou1992relaxed} with entropy regularization, where the randomization of the control is interpreted as exploration in the sense of reinforcement learning \citep{wang2020reinforcement, tang2022exploratory, reisinger2021regularity}. Convergence of policy iteration for entropy-regularized problems is established in \citet{huang2025convergence, ma2026convergence, tran2025policy}, and for classical controlled diffusions in \citet{kerimkulov2020exponential, kerimkulov2021modified}. In parallel, \citet{jia2022policy_evaluation, jia2022policy_gradient, jia2023q} developed the martingale approach to policy evaluation, policy gradient and $q$-learning in continuous time, where the optimal policy is the Gibbs measure generated from the $q$-function, and \citet{zhao2023policy} formulated policy optimization through occupation times. Convergence rates of policy gradient methods were obtained in \citet{reisinger2022linear, giegrich2022convergence}, with regret analysis in \citet{tang2024regret}. More recent contributions include the random-measure formulation of continuous-time reinforcement learning of \citet{bender2026continuous}, fast policy learning for entropy-regularized linear-quadratic control \citep{guo2026fast}, and the accuracy of discretely sampled stochastic policies \citep{jia2026accuracy}. In conventional policy iteration, the state evolves in continuous time while policies are updated in discrete steps, each using an exact or approximate policy evaluation, obtained from a partial differential equation (PDE) or a learning algorithm.

A second line of work makes the iteration itself continuous. \citet{siska2024gradient} construct gradient flows on the space of probability measures for regularized control problems, \citet{kerimkulov2023fisher, kerimkulov2024mirror, sethi2024entropy} study Fisher-Rao gradient flows, mirror descent and entropy annealing for measure-valued controls, \citet{zhang2018policy} treat policy optimization as a Wasserstein gradient flow, and \citet{zhou2023policy, zhou2024solving} propose actor-critic flows with linear convergence. In \citet{feng2026continuous}, we introduced a continuous policy-value iteration in which the approximations of the value function and of the control are updated simultaneously through coupled Langevin-type dynamics in a fictitious time $\tau$, for relaxed control problems with controlled drift and diffusion. Policy improvement holds along the flow, and convergence is established under the additional monotonicity and dissipativity hypotheses. 
The exact iteration is pointwise: at every state $x$ the policy $\pi(x,\cdot)$ follows its
own Langevin dynamics in the control variable, and the value is evolved as a function of
the state. Both are out of reach in high dimension. The goal of this paper is to develop a numerical method which implement the this iteration scheme in high dimensions for both control and state. 

The key observation is contained in the proof of the policy improvement result of \citet{feng2026continuous}: the rate of improvement is a discounted integral, along the state trajectory and over the policy history, of $\|\nabla_u\bH-\lambda\nabla_u\ln\pi\|^2$ averaged under the corresponding policy $\pi$, where $\bH$ is the Hamiltonian and $\lambda$ the entropy parameter. Since the Gibbs policy of the Hamiltonian has score $\nabla_u\bH/\lambda$, this quantity is $\lambda^2$ times the relative Fisher information between the current policy and the current Gibbs target. Fisher-type discrepancies also underlie score matching \citep{hyvarinen2005estimation, vincent2011connection}, which is the training principle behind score-based diffusion models \citep{song2019generative, song2021score}. In discrete-time reinforcement learning, diffusion models have recently been used as policy classes \citep{wang2023diffusion, hansen2023idql, chi2023diffusion, ren2024diffusion, wang2024diffusion, zhu2023diffusion}, going back to the energy-based policies of \citet{haarnoja2017reinforcement}, and \citet{psenka2024learning} train a diffusion policy by matching its score to the action-gradient of a $Q$-function. Our identity relates this principle to continuous-time control, with the Hamiltonian in place of the $Q$-function: a discounted integral of the score discrepancy over the policy history is the improvement rate of the iteration of \citet{feng2026continuous}, and hence the dissipation rate of its value gap $V-v^\tau$. 

It also shows that the residual of the match $\nabla_u\bH - \lambda\nabla_u\ln\pi$ is the velocity field with which the Langevin dynamics of \citet{feng2026continuous} transport the law of the control. What we propose in this paper is to move the samples of the policy along this velocity, as far as the policy class allows, instead of to descend the loss. The main difference between our method and the continuous-time form of $Q$-score matching, as proposed by \citet{hua2025continuous}, is that: in the latter the action itself is a diffusion in physical time whose drift is the control. The drift is penalized by a quadratic cost, and the ``score'' matched to $\lambda^{-1}\nabla_aQ$ is the first-order condition of this cost. The Boltzmann law of $Q$ appears as the stationary law of the action at a frozen state, and $Q$ is learned from data through a martingale condition. In our method, we assume that the model is known, and the policy is a conditional density on the control space. The Gibbs law of the Hamiltonian is the exact maximizer of the relaxed HJB equation, and the discounted score discrepancy gives the improvement rate of an iteration in fictitious time.

Based on the above observation, we develop a score-Hamiltonian policy iteration for relaxed control problems in which both the policy and value parts of the pointwise iteration are projected onto parametric families and run in its fictitious time. The value function (the critic) is a parametric family updated by the projection of the value dynamics onto the parameters, a Dirac--Frenkel projection whose relation to the pointwise flows is made explicit in Section \ref{sec:parametric-flows}.  Then we amortize the state-wise dynamics for the control, in
the sense of amortized inference \citep{gershman2014amortized,kingma2014auto}. A single conditional sampler (the actor) $u=T_\phi(x,\xi)$, shared by
all states, is moved by projecting the Langevin velocities $\nabla_u\bH - \lambda\nabla_u\ln\pi_\phi$ of \eqref{eq:u-dynamics-r} at sampled states onto its
parameters, so a policy sample at any state costs one evaluation of $T_\phi$.  Thus no grid in the state space is needed. 

We show that the HJB residual of the critic decomposes into the policy-evaluation residual plus $\lambda$ times the Kullback-Leibler divergence of the actor from the Gibbs law of the critic, the latter bounded by the score discrepancy when the Gibbs law satisfies a logarithmic Sobolev inequality. Together with verification and occupation-measure assumptions, these residuals yield a bound on the suboptimality of the learned policy. The joint zero-residual solutions for policy and value iterations satisfy the Hamilton-Jacobi-Bellman (HJB) equation and its Gibbs policy relation and thus uncover the optimal solution to the stochastic control problems. In the linear-quadratic case, our policy and value iterates are obtained in closed form, where the projected Langevin flow is the moment equation of the Langevin iterate, and the projected value dynamics coincide with a Lyapunov flow and the stationary points satisfy an algebraic Riccati equation. 

This paper is organized as follows. In Section \ref{sec: setting} we recall the relaxed control problem and the continuous policy-value iteration of \citet{feng2026continuous}. We represent the improvement rate using the relative Fisher information. Section \ref{sec: relaxed} introduces the parametric actor and critic flows, presents their numerical algorithm, and describes the residual diagnostics and derivative calculations used in the experiments. Section \ref{sec: LQ} discusses the linear-quadratic case, and Section \ref{sec: numerics} report numerical experiments including bimodal actor in high-dimensions. 

\section{Stochastic control problems and the continuous policy-value iteration}\label{sec: setting}
In this section we briefly review the continuous policy-value iteration of \citet{feng2026continuous} for the relaxed control problem, and provide a new characterization of its improvement rate in terms of relative Fisher information. Throughout, $W$ is a $d$-dimensional Brownian motion on a filtered probability space $(\Omega,\cF,\{\cF^W_t\}_{t\geq 0},\mathbb P)$, the state variable $X$ takes values in $\R^d$, the control takes values in $U = \R^n$, and $b:\R^d\times U\to\R^d$, $\sigma:\R^d\times U\to\R^{d\times k}$, $f:\R^d\times U\to\R$ are time-homogeneous. The discount rate is a constant $\beta>0$. Define
\begin{equation}\label{notation: hamiltonian}
\bH(x,u,p,P) := f(x,u) + \frac{1}{2}\tr[\sigma\sigma^\top(x,u)P] + b^\top(x,u)p,\qquad (x,u,p,P)\in\R^d\times U\times\R^d\times\mathbb S^d,
\end{equation}
as the Hamiltonian. For matrices we use the Frobenius contraction $A:B=\tr(A^\top B)$. We write $\bH(x,u,v_x,v_{xx})$ for $\bH(x,u,v_x(x),v_{xx}(x))$ if $v$ is a function of $x$.

\subsection{Relaxed control problem}\label{sec: relaxed setting}
In the relaxed control framework the agent chooses, at every time $t\geq 0$ and state $x$, a probability density $\pi_t(x,\cdot)$ on $U$ rather than a control value, and the state follows the exploratory dynamics \citep{wang2020reinforcement}
\begin{equation}\label{eq:exploratory}
\begin{gathered}
dX_t = \bar b(X_t,\pi_t)dt + \bar\sigma(X_t,\pi_t)dW_t,\quad X_0 = x,\\
\bar b(x,\pi) = \int_U b(x,u)\pi(x,u)du,\qquad \bar\sigma(x,\pi) = \left(\int_U\sigma\sigma^\top(x,u)\pi(x,u)du\right)^{1/2}.
\end{gathered}
\end{equation}
Here the square root is the symmetric nonnegative $d\times d$ square root. The agent chooses the control $\pi = \{\pi_t\}_{t\geq 0}$ from the admissible set $\cA(x)$, to maximize the cumulative discounted payoff, regularized by the differential entropy of the policy with a parameter $\lambda>0$, i.e., $V(x) = \sup_{\pi\in\cA(x)}J(x,\pi)$, where
\begin{equation}\label{eq:J-relaxed}
J(x,\pi) = \E\left[\int_0^\infty e^{-\beta t}\left(\int_U f(X_t,u)\pi_t(X_t,u)du - \lambda\int_U\pi_t(X_t,u)\ln\pi_t(X_t,u)du\right)dt\right].
\end{equation}
\begin{definition}\label{def:admissible-relaxed}
Let $\cP(U)$ denotes the set of probability densities on $U$. For any initial state $x\in\R^d$, the admissible set $\cA(x)$ is the collection of policies $\pi = \{\pi_t(y,\cdot)\in\cP(U),\ t\geq 0,\ y\in\R^d\}$ such that $(t,y,u)\mapsto\pi_t(y,u)$ is measurable, $\pi_t(y,\cdot)$ is a probability density on $U$ for every $(t,y)$, the coefficient integrals in \eqref{eq:exploratory} and the reward and entropy integrals are finite, the exploratory dynamics admit a unique nonexplosive strong solution, and
\begin{enumerate}
\item[(i)] $\E\left[\int_0^T\left(|\bar b(X_t,\pi_t)|^2 + \tr[\bar\sigma\bar\sigma^\top(X_t,\pi_t)]\right)dt\right]<\infty$ for each $T>0$;
\item[(ii)] $\E\left[\int_0^\infty e^{-\beta t}\left|\int_U f(X_t,u)\pi_t(X_t,u)du - \lambda\int_U\pi_t(X_t,u)\ln\pi_t(X_t,u)du\right|dt\right]<\infty$.
\end{enumerate}
A policy is stationary if $\pi_t = \bar\pi:\R^d\to\cP(U)$ does not depend on $t$, which we refer to as a policy field, and we write $J(x,\pi)$ as $J(x,\bar\pi)$.
\end{definition}
For a stationary policy, write
\begin{equation}\label{eq:reward-generator}
\begin{aligned}
r_\pi(x)&=\int_U\big(f(x,u)-\lambda\ln\pi(x,u)\big)\pi(x,u)du,\\
\cL^\pi h&=\bar b(\cdot,\pi)\cdot\nabla_xh+
\tfrac12\tr[\bar\sigma\bar\sigma^\top(\cdot,\pi)\nabla_x^2h].
\end{aligned}
\end{equation}
Its classical policy-evaluation equation is
\begin{equation}\label{eq:policy-evaluation}
\beta J(\cdot,\pi)=r_\pi+\cL^\pi J(\cdot,\pi),
\end{equation}
under the regularity conditions laid out below.

\begin{assumption}\label{assumption:parameter}

\begin{enumerate}
\item[(i)] $f$, $b$ and $\sigma$ are $C^2$ in $(x,u)$; $b$ and $\sigma$ have at most linear growth in $(x,u)$, and $f$ together with the derivatives of $f$, $b$, $\sigma$ up to order two have at most polynomial growth in $(x,u)$.
\item[(ii)] $\sigma\sigma^\top$ is uniformly positive definite, i.e.\ $\sigma\sigma^\top(x,u)\succeq c_0\,I_d$ for some constant $c_0>0$;
\item[(iii)] For every value function $v$ considered below --- the iterates $v^\tau$ of Section \ref{sec: FW} and the critics $v_\theta$ of Section \ref{sec: relaxed} --- there are constants $C_v, a_v>0$, $r_v>0$ and $m_v\geq0$ with
\begin{equation}\label{eq:confinement}
\bH\big(x,u,v_x(x),v_{xx}(x)\big)\leq C_v\big(1 + |x|^{m_v}\big) - a_v|u|^{r_v}\qquad\text{for all }(x,u)\in\R^d\times U;
\end{equation}
\item[(iv)] $V(x)\in\R$ for every $x\in\R^d$;
\item[(v)] For every policy density considered below --- $\pi_\tau(x,\cdot)$ of Section \ref{sec: FW} and  $\pi_\phi(x,\cdot)$ \ref{sec: relaxed} --- is strictly positive and $C^1$ on $U$, with finite entropy, $\nabla_u\ln\pi(x,\cdot)\in L^2(\pi(x,\cdot))$, and $\int_U(|\bH(x,u,v_x,v_{xx})|+|\nabla_u\bH(x,u,v_x,v_{xx})|^2)\pi(x,u)\,du<\infty$ for every value function $v$ against which it is evaluated.
\end{enumerate}
\end{assumption}

The HJB equation associated with the relaxed control problem is
\begin{equation}\label{eq:HJB-relaxed}
\begin{aligned}
&-\beta v(x) + \sup_{\pi\in\cP(U)}\int_U\Big(\bH(x,u,v_x,v_{xx}) - \lambda\ln\pi(u)\Big)\pi(u)du\\
=&\ -\beta v(x) + \lambda\ln\int_U\exp\left(\frac{\bH(x,u,v_x,v_{xx})}{\lambda}\right)du = 0.
\end{aligned}
\end{equation}
The supremum over densities is attained, whenever the integral is finite, at the Gibbs density of the Hamiltonian,
\begin{equation}\label{eq:gibbs}
\Gamma[v](x,u) := \frac{\exp\left(\frac{1}{\lambda}\bH(x,u,v_x(x),v_{xx}(x))\right)}{\int_U\exp\left(\frac{1}{\lambda}\bH(x,u',v_x(x),v_{xx}(x))\right)du'}.
\end{equation}
If $\hat v=V$ is a classical solution of \eqref{eq:HJB-relaxed}, its Gibbs policy $\hat\pi=\Gamma[\hat v]$ is optimal  with standard verification argument. 

\subsection{The continuous policy-value iteration}\label{sec: FW}
As observed in \citet{feng2026continuous}, for a given $x$ and an $n$-dimensional Brownian motion $B$ independent of $W$, the optimal $\hat\pi(x,\cdot)$ is the invariant distribution of the Langevin diffusion
\begin{equation}\label{eq:langevin-optimal}
du^x_\tau = \nabla_u\bH(x,u^x_\tau,\hat v_x,\hat v_{xx})d\tau + \sqrt{2\lambda}dB_\tau,
\end{equation}
if this diffusion is well posed and nonexplosive and $\int_U\exp(\bH(x,u,\hat v_x,\hat v_{xx})/\lambda)du<\infty$ \citep{geman1986diffusions, roberts1996exponential}.

Motivated by this, the iteration scheme in \citet{feng2026continuous} updates a policy and its value in a fictitious time $\tau\geq 0$. For each state $x\in\R^d$, given an initial distribution $\pi_0(x,\cdot)$ and $u^x_0\sim\pi_0(x,\cdot)$, the policy iteration is the Langevin-type dynamics
\begin{equation}\label{eq:u-dynamics-r}
du^x_\tau = \nabla_u\bH\big(x,u^x_\tau,v^\tau_x(0,x),v^\tau_{xx}(0,x)\big)d\tau + \sqrt{2\lambda}dB_\tau,\qquad \pi_\tau(x,\cdot) := \text{the law of }u^x_\tau,
\end{equation}
$v^\tau$ is the iterate for the value function which is updated simultaneously with $u_\tau$. It rests on the following construction, which uses the infinite horizon in an essential way: the most recent iterate is applied at $t=0$, and the older iterates are pushed into the discounted future.
\begin{definition}[\citealp{feng2026continuous}, Definition 3.1]\label{def:shift}
For each $\tau\geq 0$, given iterates $\{\pi_s(x,\cdot),\ 0\leq s\leq\tau\}$ for every $x$, define the time-dependent policy
\begin{equation}\label{eq:pi-reverse}
\pi^\tau_t(x,\cdot) = \begin{cases}\pi_{\tau-t}(x,\cdot), & 0\leq t\leq\tau;\\ \pi_0(x,\cdot), & t>\tau.\end{cases}
\end{equation}
Let $X^\tau$ be the solution to \eqref{eq:exploratory} under $\pi^\tau$, and $v^\tau(t,x)$ be its corresponding value with initial value $X^\tau_t=x$. The value iterate is $v^\tau(x):=v^\tau(0,x)$, and in particular $v^0=J(\cdot,\pi_0)$.
\end{definition}
Since the coefficients of the state dynamics are time-homogeneous, it can be verified that
\begin{equation}\label{eq:shift-identity}
v^\tau(t,x) = v^{\tau-t}(0,x),\quad 0\leq t\leq\tau,\qquad\text{hence}\qquad \frac{d}{d\tau}v^\tau(0,x) = -\partial_t v^\tau(t,x)\Big|_{t=0},
\end{equation}
which is what allows the family $\{v^\tau\}$ to be flowed in $\tau$. Denote as $\bH^\tau := \bH(x,u,v^\tau_x(x),v^\tau_{xx}(x))$ and $g^\tau(x) := \int_U(\bH^\tau - \lambda\ln\pi_\tau)\pi_\tau(x,u)du$, the value iteration dynamics are
\begin{equation}\label{eq:v-dynamics-r}
\begin{aligned}
dv^\tau(x) &= \left(-\beta v^\tau(x) + g^\tau(x)\right)d\tau;\\
dv^\tau_x(x) &= \left(-\beta v^\tau_x(x) + \nabla_x g^\tau(x)\right)d\tau;\\
dv^\tau_{xx}(x) &= \left(-\beta v^\tau_{xx}(x) + \nabla^2_{xx}g^\tau(x)\right)d\tau,
\end{aligned}
\end{equation}
where the $x$-derivatives act on the policy field $\pi_\tau(\cdot,\cdot)$ as well. \citet{feng2026continuous} establishes exponential convergence to $(\Gamma[V],V)$ under the additional monotonicity and dissipativity hypotheses of its convergence analysis.

The iteration is indexed pointwise by $x$. A direct grid implementation stores the policy, value, and required derivatives at every grid point, which becomes expensive as the state dimension grows. The main goal of this paper is to replace this representation by parametric families and sampled states, which enables iterations in high dimensions.

\subsection{Relative Fisher information and {the transport of the policy law}}\label{sec: fisher}
In this section we discuss the improvement of the above policy-value iteration in $\tau$ direction, and we make the following regularity assumptions.
\begin{assumption}\label{assumption-T}
The pointwise coupled flow for policy and value exists on each finite iteration interval $[0,T]$. Furthermore,
\begin{enumerate}
\item[(i)] The functions $v^\tau(x)$ are twice continuously differentiable in $\tau$ and four times in $x$, with continuous mixed derivatives.
\item[(ii)] With $a^\tau(x,u):=\nabla_u\bH(x,u,v_x^\tau,v_{xx}^\tau)-\lambda\nabla_u\ln\pi_\tau(x,u)$,
\[
\lim_{R\to\infty}\int_{\partial B_R}
 (\bH^\tau-\lambda\ln\pi_\tau)\pi_\tau a^\tau\cdot n\,dS=0,
\]
locally uniformly in $(\tau,x)$, where $B_R\subset\R^n$ and $n$ is its outward unit normal.
\item[(iii)] Writing $w^\tau=\partial_\tau v^\tau$, for every $T>0$ and $x$,
\[
\begin{aligned}
&\E_x\int_0^T e^{-\beta s}\int_U|a^{T-s}(X_s^T,u)|^2\pi_{T-s}(X_s^T,u)du\,ds<\infty,\\
&\E_x\int_0^T e^{-2\beta s}\big|\bar\sigma^\top(X_s^T,\pi_{T-s})\nabla_x w^{T-s}(X_s^T)\big|^2ds<\infty.
\end{aligned}
\]
\end{enumerate}
\end{assumption}

For probability densities $\pi,\gamma$ on $\R^n$ with $\nabla_u\ln(\pi/\gamma)\in L^2(\pi)$, the relative Fisher information of $\pi$ with respect to $\gamma$ is
\begin{equation}\label{eq:fisher-def}
\Fisher(\pi\,\|\,\gamma) := \int_{\R^n}\left\|\nabla_u\ln\frac{\pi(u)}{\gamma(u)}\right\|^2\pi(u)du \geq 0,
\end{equation}
with $\Fisher(\pi\|\gamma) = 0$ if and only if $\pi = \gamma$ when both are strictly positive and $C^1$ on the connected set $U = \R^n$, which Assumption \ref{assumption:parameter}(v) grants. The following lemma rewrites the improvement rate of the continuous iteration above in terms of relative Fisher information.
\begin{lemma}\label{lemma:fisher}
Under Assumptions \ref{assumption:parameter} and \ref{assumption-T}, for every $\tau>0$ and $x\in\R^d$, the policy and value iteration \eqref{eq:u-dynamics-r} and \eqref{eq:v-dynamics-r} satisfy
\begin{equation}\label{eq:improvement-rate}
\frac{d}{d\tau}v^\tau(x) = \lambda^2\,\E\left[\int_0^\tau e^{-\beta s}\,\Fisher\Big(\pi_{\tau-s}(X^\tau_s,\cdot)\,\Big\|\,\Gamma[v^{\tau-s}](X^\tau_s,\cdot)\Big)ds\,\Big|\,X^\tau_0 = x\right]\geq 0.
\end{equation}
\end{lemma}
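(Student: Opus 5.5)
The plan is to derive a linear evolution equation for $w^\tau:=\partial_\tau v^\tau$ whose source term is exactly $\lambda^2\Fisher(\pi_\tau\|\Gamma[v^\tau])$, and then solve it by a Feynman--Kac representation along the time-inhomogeneous state process $X^\tau$ driven by the reversed policy $\pi^\tau$ of Definition \ref{def:shift}.

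\textbf{Step 1: equation for $w^\tau$.} From the first line of \eqref{eq:v-dynamics-r}, $w^\tau=-\beta v^\tau+g^\tau$. By Assumption \ref{assumption-T}(i) we may differentiate once more in $\tau$, which gives $\partial_\tau w^\tau=-\beta w^\tau+\partial_\tau g^\tau$. I split $\partial_\tau g^\tau$ into two pieces.
\begin{itemize}
\item The piece from $\partial_\tau\bH^\tau$. Since $\bH$ is affine in $(p,P)$, we have $\partial_\tau\bH^\tau=b^\top w^\tau_x+\tfrac12\tr[\sigma\sigma^\top w^\tau_{xx}]$. Integrating this against $\pi_\tau$ yields $\cL^{\pi_\tau}w^\tau$, with $\cL^\pi$ as in \eqref{eq:reward-generator}.
\item The piece from $\partial_\tau\pi_\tau$. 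It equals $\int_U(\bH^\tau-\lambda\ln\pi_\tau)\partial_\tau\pi_\tau\,du-\lambda\int_U\partial_\tau\pi_\tau\,du$, and the last term vanishes because mass is conserved.
\end{itemize}

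\textbf{Step 2: the Fisher information.} The law of \eqref{eq:u-dynamics-r} solves the Fokker--Planck equation
\[
\partial_\tau\pi_\tau=-\nabla_u\cdot(\pi_\tau\nabla_u\bH^\tau)+\lambda\Delta_u\pi_\tau=-\nabla_u\cdot(\pi_\tau a^\tau),
\]
with $a^\tau$ the velocity defined in Assumption \ref{assumption-T}(ii). I integrate by parts on $B_R$ and let $R\to\infty$, using the boundary condition of Assumption \ref{assumption-T}(ii) and the integrability in Assumption \ref{assumption:parameter}(v). This gives
\[
\int_U(\bH^\tau-\lambda\ln\pi_\tau)\partial_\tau\pi_\tau\,du=\int_U\nabla_u(\bH^\tau-\lambda\ln\pi_\tau)\cdot a^\tau\,\pi_\tau\,du=\int_U|a^\tau|^2\pi_\tau\,du .
\]
Since $\nabla_u\ln\Gamma[v^\tau]=\nabla_u\bH^\tau/\lambda$, we have $a^\tau=-\lambda\nabla_u\ln(\pi_\tau/\Gamma[v^\tau])$, so the right-hand side equals $\lambda^2\Fisher(\pi_\tau\|\Gamma[v^\tau])=:\lambda^2 I^\tau(x)$. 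Combining with Step 1:
\[
\partial_\tau w^\tau=-\beta w^\tau+\cL^{\pi_\tau}w^\tau+\lambda^2 I^\tau .
\]

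\textbf{Step 3: Feynman--Kac.} Fix $\tau$ and let $X^\tau$ start from $x$ under $\pi^\tau$, so on $[0,\tau]$ its generator at time $s$ is $\cL^{\pi_{\tau-s}}$. Applying Itô's formula to $e^{-\beta s}w^{\tau-s}(X^\tau_s)$ and using Step 2 with index $\tau-s$:
\[
d\big(e^{-\beta s}w^{\tau-s}(X^\tau_s)\big)=-e^{-\beta s}\lambda^2I^{\tau-s}(X^\tau_s)\,ds+e^{-\beta s}\nabla_xw^{\tau-s}\cdot\bar\sigma(X^\tau_s,\pi_{\tau-s})\,dW_s .
\]
The stochastic integral is a true martingale by Assumption \ref{assumption-T}(iii), and the same assumption makes the Fisher integral finite. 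Integrating over $[0,\tau]$ and taking expectations gives
\[
w^\tau(x)=\E_x\big[e^{-\beta\tau}w^0(X^\tau_\tau)\big]+\lambda^2\E_x\int_0^\tau e^{-\beta s}I^{\tau-s}(X^\tau_s)\,ds .
\]
Finally, $w^0=-\beta v^0+g^0=0$: since $v^0=J(\cdot,\pi_0)$, we have $g^0=r_{\pi_0}+\cL^{\pi_0}v^0$, and the policy-evaluation equation \eqref{eq:policy-evaluation} then kills the boundary term. This yields \eqref{eq:improvement-rate}, and nonnegativity is immediate.

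\textbf{Main obstacle.} I expect the difficulty to lie in the justifications rather than the algebra. Differentiating $g^\tau$ in $\tau$ under the $u$-integral requires dominated convergence via the polynomial growth and confinement in Assumption \ref{assumption:parameter}. Identifying $\partial_\tau\pi_\tau$ with the Fokker--Planck right-hand side requires enough regularity of $\pi_\tau$. The boundary limit in Step 2 is handled by Assumption \ref{assumption-T}(ii), but showing that $w^0\equiv0$ uses \eqref{eq:policy-evaluation} for $\pi_0$ in the classical sense. I would verify that point carefully, for instance by invoking \eqref{eq:shift-identity} at $\tau=0$.
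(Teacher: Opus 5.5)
Your proposal is correct and follows the paper's proof essentially step for step. Like the paper, you derive $\partial_\tau w^\tau=(\cL^{\pi_\tau}-\beta)w^\tau+\lambda^2\Fisher(\pi_\tau\|\Gamma[v^\tau])$ from the Fokker--Planck equation and integration by parts, obtain $w^0=0$ from the policy-evaluation equation for $v^0=J(\cdot,\pi_0)$, and conclude with It\^o's formula on $e^{-\beta s}w^{\tau-s}(X^\tau_s)$, using Assumption \ref{assumption-T}(iii) to make the stochastic integral a true martingale; you only make the drift cancellation and the boundary-term limit more explicit than the paper does.
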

\begin{proof}
The Gibbs normalizer is independent of $u$, so
\begin{equation}\label{eq:target-score}
\nabla_u\ln\Gamma[v](x,u)=\lambda^{-1}\nabla_u\bH(x,u,v_x,v_{xx}),
\qquad
\nabla_u\bH-\lambda\nabla_u\ln\pi=-\lambda\nabla_u\ln\frac{\pi}{\Gamma[v]}.
\end{equation}
Consequently,
\begin{equation}\label{eq:fisher-identity}
\int_U|\nabla_u\bH-\lambda\nabla_u\ln\pi|^2\pi\,du
=\lambda^2\Fisher(\pi\,\|\,\Gamma[v]).
\end{equation}
Differentiate with resepct to $\tau$ in on both sides of the first equation of \eqref{eq:v-dynamics-r} and set $w^\tau:=\partial_\tau v^\tau$. The terms obtained by differentiating $v^\tau$ inside the Hamiltonian yields $\cL^{\pi_\tau}w^\tau$. For the remaining terms (involving $\pi_\tau$), by the Fokker--Planck equation $\partial_\tau\pi_\tau=-\nabla_u\cdot(\pi_\tau a^\tau)$
\begin{align}
\partial_\tau\int_U(\bH^\tau-\lambda\ln\pi_\tau)\pi_\tau\,du = -\lambda\int_U\partial_\tau\pi_\tau\,du + \int_U(\bH^\tau-\lambda\ln\pi_\tau)\partial_\tau\pi_\tau\,du
=\int_U|a^\tau|^2\pi_\tau\,du.
\end{align}
where the first term vanishes because $\pi_\tau$ vanishes at the boundary of $U$ and the second term is calculated by integration by parts. Thus
\[
\partial_\tau w^\tau=(\cL^{\pi_\tau}-\beta)w^\tau
+\lambda^2\Fisher(\pi_\tau\,\|\,\Gamma[v^\tau]),
\qquad w^0=0,
\]
where the initial condition follows from the policy-evaluation equation for $v^0=J(\cdot,\pi_0)$. Apply It\^o's formula to $e^{-\beta s}w^{\tau-s}(X_s^\tau)$ on $0\leq s\leq\tau$, and Assumption \ref{assumption-T}(iii) makes its stochastic integral a true martingale, and its terminal value is zero. Taking expectations implies \eqref{eq:improvement-rate}.
\end{proof}

Fix a state $x$ and write $\bH^\tau(x,u)=\bH(x,u,v_x^\tau,v_{xx}^\tau)$. The law $\pi_\tau(x,\cdot)$ of the Langevin iterate \eqref{eq:u-dynamics-r} solves the Fokker--Planck equation, which is a continuity equation:
\begin{equation}\label{eq:continuity}
\partial_\tau\pi_\tau
=-\nabla_u\cdot(\pi_\tau\nabla_u\bH^\tau)+\lambda\Delta_u\pi_\tau
=-\nabla_u\cdot(\pi_\tau\bV_\tau),
\end{equation}
with the velocity
\begin{equation}\label{eq:langevin-velocity}
\bV_\tau(x,u)
:=\nabla_u\bH^\tau(x,u)-\lambda\nabla_u\ln\pi_\tau(x,u)
=-\lambda\nabla_u\ln\frac{\pi_\tau(x,u)}{\Gamma[v^\tau](x,u)}.
\end{equation}
One important observation is that the iteration uses the Langevin dynamics only through this law: the policy at $\tau$ is $\pi_\tau$. A family of deterministic sampling maps $u=T(x,\xi)$ cannot reproduce the Brownian paths of \eqref{eq:u-dynamics-r}, but it can follow the law, by moving its samples with $\bV_\tau$. If the critic is instead frozen at $v$,
\begin{equation}\label{eq:kl-dissipation}
\frac{d}{d\tau}\bigg[\lambda\,\mathrm{KL}\big(\pi_\tau(x,\cdot)\,\|\,\Gamma[v](x,\cdot)\big)\bigg]
=-\int_U\|\bV_\tau(x,u)\|^2\pi_\tau(x,u)\,du,
\end{equation}
where {\eqref{eq:langevin-velocity}} is evaluated with $v^\tau=v$. {Thus the score residual is the velocity of the law, and its mean square is the frozen-target free-energy dissipation rate. The iteration scheme developed in Section \ref{sec:parametric-flows} moves the samples of a parametric sampler with the projection of this velocity onto the motions that the sampler can realize.}

\section{Amortized Score-Hamiltonian policy-value iteration}\label{sec: relaxed}
The main results of this paper is the numerical method which realize the continuous policy-value iteration in \cite{feng2026continuous} in high dimensions. It represents the policy by a conditional sampler and the value by a smooth function. Their parameters evolve together, using batches of states in place of a state grid. We first give the two parameter flows and their implementation, then the residuals used to monitor the computation.

\subsection{Parametric actor and critic flows}\label{sec:parametric-flows}
Let $\xi$ have a fixed reference law and define the following parametric families
\begin{equation}\label{eq:parametric-pair}
u=T_\phi(x,\xi)\sim\pi_\phi(x,\cdot),\qquad
v=v_\theta(x),\qquad \phi\in\R^{m_a},\quad\theta\in\R^{m_c}.
\end{equation}
The actor $T_\phi$ generates controls, and the critic $v_\theta$ supplies their Hamiltonian target through its spatial derivatives. In the implementation below, the actor has an evaluable log-density and score $s_\phi=\nabla_u\ln\pi_\phi$, e.g. Gaussian samplers and invertible normalizing flows. Assume that the parametric family so that $T_\phi(x,\xi)$ has positive smooth densities, which are differentiable in parameterizations, and are integrable as needed for the expectations and derivatives below. The sampling law $\rho$ weights the states on which the approximation is trained.

\paragraph{Actor flow.}\label{sec: score actor}
Given the current critic $v_\theta$, write $\bH_\theta(x,u)=\bH(x,u,v_{\theta,x},v_{\theta,xx})$. The Langevin iterate transports its law with the velocity \eqref{eq:langevin-velocity}, which equals
\begin{equation}\label{eq:actor-velocity}
\bV_{\theta,\phi}(x,u)=\nabla_u\bH_\theta(x,u)-\lambda s_\phi(x,u),
\end{equation}
at the current actor and critic.
A parameter velocity $h$ moves an actor sample with velocity $\partial_\phi T_\phi h$. We choose the closest such motion to \eqref{eq:actor-velocity}:
\begin{equation}\label{eq:projected-langevin}
\dot\phi=\argmin_h\int_{\R^d}\E_\omega
\big\|\partial_\phi T_\phi(x,\xi)h-\bV_{\theta,\phi}(x,T_\phi(x,\xi))\big\|^2\rho(dx).
\end{equation}
$\E_\omega$ is the expectation over the reference noise $\xi\sim\omega$ at fixed $x$, $\E_\omega[F(x,\xi)]=\int F(x,\xi)\,\omega(d\xi)$. Fixing $x$ and a noise label $\xi$, if the parameters move with velocity $h\in\R^{m_a}$, the sample $T_\phi(x,\xi)$ moves deterministically, at the velocity
\[
\frac{d}{d\epsilon}T_{\phi+\epsilon h}(x,\xi)\Big|_{\epsilon=0}=\partial_\phi T_\phi(x,\xi)\,h\in\R^n,\qquad \partial_\phi T_\phi(x,\xi)\in\R^{n\times m_a},
\]
while the Langevin iterate asks the sample at $u$ to move with $\bV_{\theta,\phi}(x,u)=\nabla_u\bH_\theta(x,u)-\lambda s_\phi(x,u)$. The difference $\partial_\phi T_\phi(x,\xi)h-\bV_{\theta,\phi}(x,T_\phi(x,\xi))$ is therefore the velocity error of one sample, and $\E_\omega$ averages its squared Euclidean norm over all samples at $x$, that is, over the controls of $\pi_\phi(x,\cdot)$. Integrating against $\rho$ and minimizing over $h$ selects the parameter velocity whose induced motion of the samples, at all sampled states, is closest in mean square to the Langevin velocity, and matching the velocities makes the laws evolve alike.

This is a linear least-squares problem, with Gram matrix and right-hand side rewritten as, respectively
\begin{equation}\label{eq:langevin-metric}
\begin{aligned}
G_T(\phi)&=\int\E_\omega[\partial_\phi T_\phi^\top\partial_\phi T_\phi]d\rho,\\
\mathbf b(\theta,\phi)&=\int\E_\omega[\partial_\phi T_\phi^\top\bV_{\theta,\phi}(x,T_\phi)]d\rho,
\end{aligned}
\end{equation}
the square in \eqref{eq:projected-langevin} can be expanded as $h^\top G_T h-2h^\top\mathbf b+\int\E_\omega\|\bV_{\theta,\phi}(x,T_\phi)\|^2d\rho$, and that every minimizer solves the normal equations $G_T\dot\phi=\mathbf b$. We take the one of minimum norm, $$\dot\phi=G_T^+\mathbf b,$$ 
where for a symmetric positive semidefinite matrix $G$ with eigendecomposition $G=\sum_i\sigma_ie_ie_i^\top$, the Moore--Penrose pseudoinverse is $G^+:=\sum_{\sigma_i>0}\sigma_i^{-1}e_ie_i^\top$, and it equals $G^{-1}$ when $G$ is invertible. 

Notice that at the minimizer,  the constant term in the above expansion splits into the part that the family follows, $\|P_\phi\bV_{\theta,\phi}\|^2=\mathbf b^\top G_T^+\mathbf b$, and the projection defect $\|(I-P_\phi)\bV_{\theta,\phi}\|^2$, where $P_\phi$ is the orthogonal projection in $L^2(\omega\otimes\rho;\R^n)$ onto the sample velocities $\partial_\phi T_\phi h$. For example, for the Gaussian sampler $T(\xi)=m+L\xi$ with $m$ and $L$ being only functions of $x$ and $\omega=\cN(0,I_n)$, one has $\partial T/\partial m=I$, $\partial T/\partial L_{ij}=e_i\xi_j$, $G_T=I$ and $s_\phi(x,m+L\xi)=-L^{-\top}\xi$, and \eqref{eq:projected-langevin} gives
\[
\dot m=\E_\omega\big[\nabla_u\bH_\theta(x,m+L\xi)\big],\qquad
\dot L=\E_\omega\big[\nabla_u\bH_\theta(x,m+L\xi)\,\xi^\top\big]+\lambda L^{-\top}.
\]

Notice that without amortization, a grid-free scheme would keep particles at each sampled state and
run the Langevin dynamics there; since the sampled states change with the occupation
measure, these particles would have to be regenerated at every update. The projected
flow replaces the state-indexed family of Langevin dynamics by a single flow in $\phi$,
and states interact only through the shared parameters in $G_T$. The price is the
component $(I-P_\phi)\mathbf V$ of the velocity that the family cannot represent, which
the score loss monitors.

\paragraph{Critic flow.}\label{sec: critic}
Define the current policy source and its value residual by
\begin{equation}\label{eq:source}
g_\phi(x;\theta)=\E_{u\sim\pi_\phi(x,\cdot)}
\big[\bH_\theta(x,u)-\lambda\ln\pi_\phi(x,u)\big],\qquad
\cE^\phi_\theta=g_\phi-\beta v_\theta.
\end{equation}
Project the value dynamics in \eqref{eq:v-dynamics-r} with the same idea as for the policy:
\begin{equation}\label{eq:projected-flow}
\dot\theta=\argmin_h\int_{\R^d}|\nabla_\theta v_\theta(x)^\top h-\cE^\phi_\theta(x)|^2\rho(dx),\qquad
{G_\theta}=\int \nabla_\theta v_\theta \nabla_\theta v_\theta(x)^\top d\rho.
\end{equation}
With $\mathbf r=\int \nabla_\theta v_\theta\cE^\phi_\theta\,d\rho$ and selecting the minimum-norm solutions gives the coupled parameter flows
\begin{equation}\label{eq:joint-flow}
\boxed{\quad
\dot\phi=G_T^+\mathbf b,\qquad
\dot\theta={G_\theta^+}\mathbf r.
\quad}
\end{equation}
If the sampler tangent space contains the Langevin velocity and the critic tangent space contains the value residual, the projections are exact. Section \ref{sec: LQ} verifies this for affine Gaussian actors and quadratic critics.

\paragraph{Relation to fitted value iteration and temporal-difference (TD) learning.}
By Definition \ref{def:shift}, an increment $d\tau$ of the pointwise continuous iteration updates the value by the corresponding Hamiltonian under the newest policy, while the policy moves toward the Gibbs law of the current value. It is a continuous-time version of temporal difference learning in \citep{bertsekas1996neuro} which guaranteed partial improvement through the Langevin step in policies. The proposed critic flow replaces each continuous update by a regression. For a critic linear in its parameters, $v_\theta=\theta^\top a$ with fixed features $a(x)$, an explicit Euler step of length $\Delta\tau$ of \eqref{eq:projected-flow} gives
\[
v_{\theta_{k+1}}=\Pi_\rho\big[v_{\theta_k}+\Delta\tau\,\cE^\phi_{\theta_k}\big]\qquad\rho\text{-a.e.},
\]
where $\Pi_\rho$ is the orthogonal projection of $L^2(\rho)$ onto the span of the features. By It\^o's formula, the update of $v_{\theta_k}$ over the physical time $\Delta\tau$ under the current actor is $v_{\theta_k}(x)+\E_x^{\pi_\phi}\int_0^{\Delta\tau}e^{-\beta t}\cE^\phi_{\theta_k}(X_t)dt$. The Euler step of this is similar to one step of fitted value iteration \citep{munos2008finite} for the evaluation equation \eqref{eq:policy-evaluation} of the current actor, while the update is computed from the generator instead of from sampled next states. For a nonlinear class, \eqref{eq:projected-flow} projects the velocity of the value dynamics onto the tangent space of the family. This resembles the Dirac--Frenkel time-dependent variational principle \citep{lubich2008quantum}, used to evolve network parameters for evolution equations by \citet{du2021evolutional} and in the neural Galerkin schemes of \citet{bruna2024neural}. Similar to the fitted $Q$-iteration in continuous action spaces, where the maximization over actions is replaced by a search over a policy class \citep{antos2007fitted}, our actor moves toward the Gibbs law of the critic.

At a stationary point, $\mathbf r=\int \nabla_\theta v_\theta\cE^\phi_\theta\,d\rho=0$, so the value residual is $L^2(\rho)$-orthogonal to the tangent space. For a linear class this is the Galerkin approximation of \eqref{eq:policy-evaluation} with weight $\rho$, as in the Galerkin approximations of the generalized HJB equation within policy iteration of \citet{beard1997galerkin}. Its fixed-point $v_\theta=\Pi_\rho[g_\phi(\cdot;\theta)/\beta]$ is the counterpart of the projected Bellman equation solved by temporal-difference learning with linear function approximation \citep{tsitsiklis1997analysis}. With $\rho=\rho^\phi$ from \eqref{eq:occupation-measure}, It\^o's formula applied to the exploratory state process $X$ under $\pi_\phi$ with $X_0\sim\mu$ implies
\[
\mathbf r=\beta\,\E_\mu\int_0^\infty e^{-\beta t}\nabla_\theta v_\theta(X_t)\big[dv_\theta(X_t)+(r_{\pi_\phi}-\beta v_\theta)(X_t)\,dt\big].
\]
This is the expected continuous-time temporal-difference update \citep{doya2000reinforcement} with test functions $\nabla_\theta v_\theta$, and $\mathbf r=0$ coincide with the martingale orthogonality condition in the sense of \citet{jia2022policy_evaluation}, evaluated from the generator rather than along sampled paths as in martingale and backward-SDE methods \citep{jia2022policy_evaluation, han2018solving, beck2019machine}. 

The weighting $\rho$ plays the role of the sampling distribution of temporal-difference learning, which converges for linear classes under on-policy sampling and can diverge under other sampling distributions or with nonlinear classes \citep{baird1995residual, tsitsiklis1997analysis}. Residual-gradient methods \citep{baird1995residual} instead minimize a squared residual, $\int|\cE^\phi_\theta|^2d\rho$ or $\int|\cR_\theta|^2d\rho$ with the HJB residual $\cR_\theta$ of \eqref{eq:residuals}, is the least-squares collocation of physics-informed and deep Galerkin solvers \citep{raissi2019physics, sirignano2018dgm}, used for exploratory HJB equations within policy iteration by \citet{kim2026physics}. Finally, notice that actor--critic algorithms with a temporal-difference critic are usually analyzed on two time scales, with the critic faster than the actor \citep{konda2003actor}. The joint flow \eqref{eq:joint-flow} runs both in one fictitious time because the pointwise iteration it projects does, and Lemma \ref{lemma:fisher} shows that the exact coupled flow improves monotonically when the critic starts from the exact value of the initial policy.

\subsection{The algorithm}\label{sec: algorithm}
We next specifies the algorithm that implement the score-Hamiltonian iteration introduced above. At each iteration we sample $N$ states and $K$ controls per state. For actor samples $u_{ij}=T_\phi(x_i,\xi_{ij})$, put
$J_{ij}=\partial_\phi T_\phi(x_i,\xi_{ij})$ and
$V_{ij}=\bV_{\theta,\phi}(x_i,u_{ij})$. The empirical actor system is
\begin{equation}\label{eq:empirical-actor}
\widehat G_T=\frac1{NK}\sum_{i,j}J_{ij}^\top J_{ij},\qquad
\widehat{\mathbf b}=\frac1{NK}\sum_{i,j}J_{ij}^\top V_{ij}.
\end{equation}
For the critic, let $a_i=\nabla_\theta v_\theta(x_i)$ and let $\widehat g_i$ estimate \eqref{eq:source} using the updated actor. Then
\begin{equation}\label{eq:empirical-critic}
{\widehat G_\theta}=\frac1N\sum_i a_i a_i^\top,\qquad
\widehat{\mathbf r}=\frac1N\sum_i a_i\big(\widehat g_i-\beta v_\theta(x_i)\big).
\end{equation}
Exact action expectations replace sample averages whenever available. In particular, Gaussian entropy is analytic and for an invertible sampler it is computed from the base entropy and the expected log-Jacobian determinant.

\begin{center}
\refstepcounter{shpialgorithm}\label{alg:shpi}
\fbox{\begin{minipage}{0.94\linewidth}
\textbf{Algorithm \theshpialgorithm. Score-Hamiltonian policy iteration}\par\smallskip
\textbf{Input:} actor $T_\phi$, critic $v_\theta$, state-sampling rule, batch sizes $N,K$, step $\Delta\tau$, and damping parameters $\eta_a,\eta_c\geq0$.\par
\textbf{Initialize:} choose $\phi_0,\theta_0$, initialize the critic to the value of the initial policy. \par\smallskip
\textbf{Repeat:}
\begin{enumerate}
\item \textbf{Sample states.} Draw or refresh $x_1,\ldots,x_N$ from the state-sampling rule.
\item \textbf{Compute the target.} Evaluate $v_\theta$, $v_{\theta,x}$ and the the Hessian term in the Hamiltonian, at the batch states.
\item \textbf{Update the actor.} Draw $\xi_{ij}\sim\omega$, compute $u_{ij}$, $V_{ij}$ and \eqref{eq:empirical-actor}, and set
\[
\phi\leftarrow\phi+\Delta\tau(\widehat G_T+\eta_a I)^+\widehat{\mathbf b}.
\]
\item \textbf{Update the critic.} Using fresh controls from the updated actor, or exact action expectations, compute $\widehat g_i$ and \eqref{eq:empirical-critic}, and set
\[
\theta\leftarrow\theta+\Delta\tau({\widehat G_\theta}+\eta_c I)^+\widehat{\mathbf r}.
\]
If used, apply the derivative-residual correction described in Section \ref{sec: three flows}.
\item \textbf{Monitor.} On a validation batch, record the actor and critic residuals in \eqref{eq:residuals}, together with the parameter-update sizes. Stop at the prescribed tolerance or computational budget.
\end{enumerate}
\textbf{Output:} $v_\theta$ and the policy sampled by $u=T_\phi(x,\xi)$.
\end{minipage}}
\end{center}

The actor-then-critic order is a first-order sequential discretization of \eqref{eq:joint-flow}. Damping stabilizes ill-conditioned Gram systems and it changes the exact projection. A practical state-sampling rule draws $X_0\sim\mu$, simulates the exploratory dynamics under the current policy, and stops at an independent $T\sim\mathrm{Exp}(\beta)$. Exact stopped states have the normalized discounted occupation law
\begin{equation}\label{eq:occupation-measure}
\rho^\phi=\beta\int_0^\infty e^{-\beta t}\mathbb P_\mu(X_t^{\pi_\phi}\in\cdot)dt,
\end{equation}
which helps sampling states that are visited more often under the current policy. The experiments of Section \ref{sec: numerics} refresh a state pool from the current policy's occupation measure (Section \ref{sec: num B}), weight a fixed set of nodes by it (Section \ref{sec: num DW}), or use a fixed reference-based pool (Section \ref{sec: num flows}), and may mix it with a wider exploration distribution. Exact agreement with the pointwise iteration requires exact initial policy evaluation, exact expectations and no projection error, and the finite-batch algorithm is an approximation to these flows.

\subsection{Residuals Analysis}\label{sec: two residuals}
We report three residuals: the actor residual $\cA^\phi_\theta$ compares the policy with the Gibbs law of the current critic, the critic residual $\cE^\phi_\theta$ checks the value equation under the current policy, and  the HJB residual $\cR_\theta$ characerizes the optimality condition:
\begin{equation}\label{eq:residuals}
\begin{aligned}
\cA^\phi_\theta(x)&=\E_{\pi_\phi(x,\cdot)}\|\bV_{\theta,\phi}(x,u)\|^2
=\lambda^2\Fisher(\pi_\phi(x,\cdot)\|\Gamma[v_\theta](x,\cdot)),\\
\cE^\phi_\theta(x)&=g_\phi(x;\theta)-\beta v_\theta(x),\\
\cR_\theta(x)&=\lambda\ln\int_Ue^{\bH_\theta(x,u)/\lambda}du-\beta v_\theta(x).
\end{aligned}
\end{equation}
%The score loss is
%\begin{equation}\label{eq:score-loss}
%\cL(\phi;v_\theta)=\int\cA^\phi_\theta\,d\rho.
%\end{equation}
%For multimodal targets, we also evaluate the score difference under a fixed reference density $\nu$ covering the control regions of interest:
%\begin{equation}\label{eq:reference-certificate}
%\cL^\nu(\phi;v_\theta)=\int\int_U\|\bV_{\theta,\phi}(x,u)\|^2\nu(u)du\,\rho(dx),
%\end{equation}
%which reveals errors in control regions poorly sampled by the actor. Small projected updates alone do not imply small residuals.
At a fixed critic and sampling law, the population actor dissipates the Gibbs free energy at its projected rate:
\begin{equation}\label{eq:projected-dissipation}
\frac{d}{d\tau}\lambda\int\mathrm{KL}(\pi_\phi\|\Gamma[v_\theta])d\rho
=-\mathbf b^\top G_T^+\mathbf b=-\|P_\phi\bV_{\theta,\phi}\|^2,
\end{equation}
where $P_\phi$ is the orthogonal projection onto sample velocities $\partial_\phi T_\phi h$ in $L^2(\omega\otimes\rho)$. The next proposition decomposes the HJB residual into two components, and characterize the estimation error in terms of residuals. 

\begin{proposition}\label{prop:two-residuals}
(i) For positive smooth actor densities with finite entropy and integrable Hamiltonian,
\begin{equation}\label{eq:R-E-KL}
\cR_\theta=\cE^\phi_\theta+\lambda\,\mathrm{KL}(\pi_\phi\|\Gamma[v_\theta]),
\end{equation}
and $\cA^\phi_\theta=\cE^\phi_\theta=0$ is equivalent to that $v_\theta$ solves the HJB equation and $\pi_\phi=\Gamma[v_\theta]$ is the optimal policy $\rho$-a.e. 

(ii) If $-\nabla^2_{uu}\bH_\theta(x,u)\succeq\mu(x)I$ with $\mu(x)>0$, then
\begin{equation}\label{eq:KL-Fisher}
\lambda\,\mathrm{KL}(\pi_\phi\|\Gamma[v_\theta])\leq\frac{\cA^\phi_\theta}{2\mu(x)}.
\end{equation}

(iii) If $\mu_*:=\inf_x\mu(x)>0$, the learned policy is admissible and along the state process $X$ under the stationary policy $\pi_\theta$,

(a) $e^{-\beta T}\E_x^\pi[v_\theta(X_T)]\longrightarrow0$, as $T\rightarrow \infty$,

(b) $e^{-\beta t}v_\theta^\top\bar\sigma(X_t)$ is square integrable, and 

(c) the residual are absolutely integrable,

Then
\begin{equation}\label{eq:suboptimality}
0\leq V(x)-J(x,\pi_\phi)\leq
\frac2\beta\|\cE^\phi_\theta\|_\infty+
\frac1{2\beta\mu_*}\|\cA^\phi_\theta\|_\infty.
\end{equation}
\end{proposition}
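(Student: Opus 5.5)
For (i), I will fix $x$, write $Z_\theta(x)=\int_U e^{\bH_\theta(x,u)/\lambda}du$, and use $\ln\Gamma[v_\theta]=\bH_\theta/\lambda-\ln Z_\theta$. Then
\[
\lambda\,\mathrm{KL}(\pi_\phi\|\Gamma[v_\theta])=\E_{\pi_\phi}\big[\lambda\ln\pi_\phi-\bH_\theta\big]+\lambda\ln Z_\theta=-g_\phi(x;\theta)+\lambda\ln Z_\theta .
\]
The finite entropy and integrability hypotheses justify splitting the integral. Adding $g_\phi-\beta v_\theta$ gives \eqref{eq:R-E-KL}.

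For the equivalence, suppose first that $\cA^\phi_\theta=0$. Then $\Fisher(\pi_\phi\|\Gamma[v_\theta])=0$, which, as noted after \eqref{eq:fisher-def}, forces $\pi_\phi=\Gamma[v_\theta]$. Hence the KL term vanishes and $\cR_\theta=\cE^\phi_\theta=0$, so $v_\theta$ solves \eqref{eq:HJB-relaxed} $\rho$-a.e. and the Gibbs policy is the maximizer in \eqref{eq:HJB-relaxed}. Conversely, if $v_\theta$ solves HJB and $\pi_\phi=\Gamma[v_\theta]$, then $\mathrm{KL}=0$ and the score residual is zero, so \eqref{eq:R-E-KL} yields $\cE^\phi_\theta=\cR_\theta=0$. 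Optimality will be quoted from the verification statement after \eqref{eq:gibbs}.

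For (ii), the condition $-\nabla^2_{uu}\bH_\theta\succeq\mu I$ makes $\Gamma[v_\theta](x,\cdot)\propto e^{\bH_\theta/\lambda}$ strongly log-concave with constant $\mu/\lambda$. By Bakry--\'Emery it satisfies the log-Sobolev inequality $\mathrm{KL}(\pi\|\Gamma)\le\frac{\lambda}{2\mu}\Fisher(\pi\|\Gamma)$. Multiplying by $\lambda$ and using $\cA^\phi_\theta=\lambda^2\Fisher$ from \eqref{eq:residuals} gives \eqref{eq:KL-Fisher}.

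For (iii), I plan to sandwich $v_\theta$ between $J(\cdot,\pi_\phi)$ and $V$, using $V(x)-J\le (V-v_\theta)+(v_\theta-J)$.

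\emph{Lower comparison.} Apply It\^o to $e^{-\beta t}v_\theta(X_t)$ under $\pi_\phi$. Conditions (a), (b) and (c) remove the martingale and terminal terms. This gives
\[
v_\theta(x)=J(x,\pi_\phi)-\E_x\int_0^\infty e^{-\beta t}\cE^\phi_\theta(X_t)\,dt,
\]
since $\cL^{\pi_\phi}v_\theta+r_{\pi_\phi}=g_\phi$. Hence $|v_\theta-J(\cdot,\pi_\phi)|\le\|\cE^\phi_\theta\|_\infty/\beta$.

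\emph{Upper comparison.} Take an arbitrary admissible $\pi\in\cA(x)$. Apply It\^o to $e^{-\beta t}v_\theta(X^\pi_t)$ and use the sup in \eqref{eq:HJB-relaxed}, namely $\int(\bH_\theta-\lambda\ln\pi_t)\pi_t\le\lambda\ln Z_\theta=\beta v_\theta+\cR_\theta$. This gives $J(x,\pi)\le v_\theta(x)+\|\cR_\theta\|_\infty/\beta$, and therefore $V\le v_\theta+\|\cR_\theta\|_\infty/\beta$. By (i) and (ii), $\|\cR_\theta\|_\infty\le\|\cE^\phi_\theta\|_\infty+\|\cA^\phi_\theta\|_\infty/(2\mu_*)$. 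Adding the two comparisons yields \eqref{eq:suboptimality}. The lower bound $V\ge J$ is trivial once $\pi_\phi$ is admissible.

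The main obstacle is the upper comparison for a general admissible $\pi$. Hypotheses (a)--(c) are stated only along the learned policy, so the transversality and martingale properties must be assumed or verified uniformly over $\cA(x)$. My fallback is a localization with stopping times plus Fatou, using that $\cR_\theta$ is bounded, though this still needs some growth control of $v_\theta$. I would treat that as part of the verification assumption referenced in the abstract.
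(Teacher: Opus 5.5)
Your proof is correct and follows the paper's argument: the same Gibbs-normalizer substitution gives (i), Bakry--\'Emery gives (ii), and (iii) uses the split $V-J=(V-v_\theta)+(v_\theta-J)$ with It\^o's formula applied to $e^{-\beta t}v_\theta(X_t)$. The one variation is in the bound $V-v_\theta\le\frac1\beta\|\cR_\theta\|_\infty$. The paper runs the evaluation identity along an optimal policy $\pi^*$, whose evaluation residual against $v_\theta$ is at most $\cR_\theta$, whereas you bound $J(x,\pi)$ for every admissible $\pi$ and take the supremum, which avoids assuming $\pi^*$ exists; either way, transversality along the comparison policy is needed beyond (a)--(c), as you correctly flag.
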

\begin{proof}
Substitute $\ln\Gamma[v_\theta]=\bH_\theta/\lambda-\ln \int_Ue^{\bH_\theta/\lambda}du$ into the definition of $\cR_\theta$ yields \eqref{eq:R-E-KL}, and \eqref{eq:projected-dissipation} impies the optimality condition. 

Sicne $-\bH/\lambda$ has Hessian bounded below by $(\mu/\lambda)I_n$, the Bakry--\'Emery logarithmic Sobolev inequality \citep{bakry1985diffusions} therefore gives
\[
\mathrm{KL}(\pi_\phi\|\Gamma[v_\theta])\leq\frac{\lambda}{2\mu(x)}
\Fisher(\pi_\phi\|\Gamma[v_\theta]).
\]Multiplying by $\lambda$ on both sides implies \eqref{eq:KL-Fisher}

Finally, applying It\^o's formula to $e^{-\beta t}v_\theta(X_t)$ and use the transversality and the fact the $e^{-\beta t}J(X_t,\pi_\theta)$ is a martingale, 
\begin{align}
J(x,\pi_\phi)-v_\theta(x)&=\E_x^{\pi_\phi}\int_0^\infty e^{-\beta t}\cE^\phi_\theta(X_t)dt,\label{eq:J-minus-v}
\end{align}
In particular, if $\pi_\phi = \pi^*$, then $J(x,\pi_\phi) = V(x)$, and 
applying it to $\pi^*$ and $\cE^\phi_\theta$ is bounded by $\cR_\theta$ by \eqref{eq:R-E-KL}, it follows that
\begin{align}
V(x)-v_\theta(x)&\leq\E_x^{\pi^*}\int_0^\infty e^{-\beta t}\cR_\theta(X_t)dt.\label{eq:V-minus-v}
\end{align}
Taking the difference between \eqref{eq:V-minus-v} and \eqref{eq:J-minus-v} implies \eqref{eq:suboptimality}.
\end{proof}

\subsection{Derivative corrections}\label{sec: three flows}
%\paragraph{Hamiltonian derivatives.}
%In the above algorithm, the actor iteration uses $\nabla_u\bH_\theta$, not a dense Hessian matrix alone. In the special case where both the drift and diffusion of the state dynamics are linear in the control variable and independent of the state, i.e. $b=b_0+Bu$ and $\sigma=\sigma_0+\sum_{j=1}^n\sigma_j u_j$, then for any symmetric $P$,
%\begin{equation}\label{eq:hessian-form}
%\begin{gathered}
%\nabla_u\bH=\nabla_uf+B^\top p+m(P)+M(P)u,\\
%m_i(P)=\tr(\sigma_i^\top P\sigma_0),\qquad
%M_{ij}(P)=\tr(\sigma_i^\top P\sigma_j).
%\end{gathered}
%\end{equation}
%The coefficients use $k(n+1)$ Hessian-vector products and are reused across the action batch, while if the diffusion is control-independent, no Hessian enters the actor target. For fixed sampling budgets and polynomial-size parameterizations, these evaluations and the Gram solvers have polynomial arithmetic cost, while conditioning, approximation accuracy and sampling remain separate computational issues.

%\paragraph{Gibbs structure.}\label{sec:implicit-gibbs}
%An alternative to a learned sampler is to sample $\Gamma[v_\theta]$ directly using its analytic score. Then $g_\phi=\lambda\ln Z_\theta$ and the actor step is replaced by Gibbs sampling; finite-chain and normalizer errors must be included in the implementation. 

The policy depends on critic derivatives, whose accuracy need not follow from a small sampled scalar residual of the value iteration. To improve them, the experiments can optionally follow the critic step by projections of the gradient and Laplacian lines of \eqref{eq:v-dynamics-r} onto the critic family, whose targets are the derivative residuals
\begin{equation}\label{eq:residual-x-xx}
\cE^{(1)}_\theta=\nabla_xg_\phi-\beta\nabla_xv_\theta,\qquad
\cE^{(2)}_\theta=\Delta g_\phi-\beta\Delta v_\theta.
\end{equation}
The parameter velocity $h$ minimizes 
\begin{equation}\label{eq:weights}
\frac12\int(\alpha_1\|\partial_\theta\nabla_xv_\theta\,h-\cE^{(1)}_\theta\|^2+\alpha_2|\partial_\theta\Delta v_\theta\,h-\cE^{(2)}_\theta|^2)d\rho, 
\end{equation}
with chosen nonnegative weights and a small damping term, and $\theta\leftarrow\theta+\Delta\tau\,h$, while for the undamped system, $h$ is $\beta$ times the Gauss--Newton step for the loss $\frac12\int(\alpha_1\|\cE^{(1)}_\theta\|^2+\alpha_2|\cE^{(2)}_\theta|^2)d\rho$ when the entire source $g_\phi$ and its spatial derivatives are frozen in the parameter differentiation. This is a frozen-source surrogate: recomputing the source after the update does not in general preserve a residual-descent guarantee. 

\section{Exactness in the linear-quadratic class}\label{sec: LQ}
In this section we analyze the score-Hamiltonian iteration in the linear-quadratic models and show that it recovers the continuous iteration in \cite{feng2026continuous} and converge to the optimal solution with reasonable parametric families for the policy and value. Let $b(x,u)=Ax+Cu$, $\sigma(x,u)=\sigma_0$, and
$f(x,u)=-\frac12(x^\top Qx+u^\top Ru)-q^\top x-r^\top u$, with $Q\succeq0$ and $R\succ0$. With the ansatz $v(x)=-\frac12x^\top\Pi x-p^\top x-c$, $\Pi=\Pi^\top$, the target score and Gibbs policy are
\begin{equation}\label{eq:LQ-policy}
\begin{aligned}
\nabla_u\bH(x,u,v_x,v_{xx})&=-Ru-r-C^\top(\Pi x+p),\\
\Gamma[v](x,\cdot)&=\cN\big(-R^{-1}(C^\top\Pi x+C^\top p+r),\ \lambda R^{-1}\big).
\end{aligned}
\end{equation}
Substitution into the HJB equation \eqref{eq:HJB-relaxed} implies the following system
\begin{equation}\label{eq:ARE}
\begin{gathered}
A^\top\Pi+\Pi A-\beta\Pi-\Pi CR^{-1}C^\top\Pi+Q=0,\\
\big(\beta I-(A-CK)^\top\big)p=q-K^\top r,\qquad K:=R^{-1}C^\top\Pi,\\
c=\frac{\tr[\sigma_0\sigma_0^\top\Pi]-(C^\top p+r)^\top R^{-1}(C^\top p+r)-\lambda n\ln(2\pi\lambda)+\lambda\ln\det R}{2\beta}.
\end{gathered}
\end{equation}
Under stabilizability and detectability for the discounted system, the value solution is the Riccati root for which $A-CK-\frac\beta2I$ is Hurwitz, together with the verification and transversality conditions.

Use a quadratic critic and an affine Gaussian sampler,
\begin{equation}\label{eq:LQ-param}
\begin{gathered}
v_\theta(x)=-\frac12x^\top\Pi_\theta x-p_\theta^\top x-c_\theta,\qquad
T_\phi(x,\xi)=-K_\phi x-k_\phi+L_\phi\xi,\quad \xi\sim\cN(0,I_n),\\
\pi_\phi(x,\cdot)=\cN(-K_\phi x-k_\phi,\Sigma_\phi),\qquad \Sigma_\phi=L_\phi L_\phi^\top,
\end{gathered}
\end{equation}
where $\Pi_\theta$ is symmetric and $L_\phi$ is a free invertible matrix. The critic residual is quadratic, and the {Langevin velocity} at $T_\phi(x,\xi)$ is affine in $(x,\xi)$. Both therefore lie in their respective tangent spaces. Dropping parameter subscripts, the projected critic equations are
\begin{equation}\label{eq:lyapunov}
\begin{gathered}
\dot\Pi=Q+K^\top RK+(A-CK)^\top\Pi+\Pi(A-CK)-\beta\Pi,\\
\dot p=(A-CK-\beta I)^\top p+K^\top(Rk-r)-\Pi Ck+q,
\end{gathered}
\end{equation}
\begin{equation}\label{eq:LQ-constant-flow}
\dot c=\frac12\tr[\sigma_0\sigma_0^\top\Pi]+\frac12\big(k^\top Rk+\tr[R\Sigma]\big)-k^\top(C^\top p+r)
-\frac\lambda2\ln\big((2\pi e)^n\det\Sigma\big)-\beta c,
\end{equation}
and the actor equations are
\begin{equation}\label{eq:LQ-langevin}
\begin{gathered}
\dot K=C^\top\Pi-RK,\qquad \dot k=C^\top p+r-Rk,\\
\dot L=\lambda L^{-\top}-RL,\qquad \dot\Sigma=2\lambda I-R\Sigma-\Sigma R.
\end{gathered}
\end{equation}
At a joint stationary point of these equations recover \eqref{eq:LQ-policy} and \eqref{eq:ARE}.

\begin{proposition}
\label{prop:lq-single-tau}
Let $\pi_0(x,\cdot)=\cN(-K_0x-k_0,\Sigma_0)$, $\Sigma_0\succ0$, with a finite value $v^0=J(\cdot,\pi_0)$. Initialize \eqref{eq:LQ-param} with $v_{\theta_0}=v^0$ and $\pi_{\phi_0}=\pi_0$.
Let $\alpha_0$ be the the largest real part of the eigenvalues of $A-CK_0$.
Assume \(\beta>2\alpha_0\), $\rho$ has finite fourth moment and the Gram matrices are non-singular, then the undamped projections reproduce the pointwise iteration:
\begin{equation*}
v_{\theta_\tau}=v^\tau,\qquad \pi_{\phi_\tau}(x,\cdot)=\pi_\tau(x,\cdot)\quad\text{for every }x.
\end{equation*}
The explicit Euler updates also coincide if the action expectations are exact, and both schemes use the same step, update order and coordinates.
\end{proposition}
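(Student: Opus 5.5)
The plan is to show that the pointwise iteration \eqref{eq:u-dynamics-r}--\eqref{eq:v-dynamics-r}, started from a Gaussian affine policy and its quadratic value, stays inside the class \eqref{eq:LQ-param} for all $\tau$. Then I show its coefficients obey exactly the ODE system \eqref{eq:lyapunov}--\eqref{eq:LQ-langevin} produced by the projected flows \eqref{eq:joint-flow}. Uniqueness of solutions of that finite-dimensional ODE, which is locally Lipschitz while $L$ stays invertible, then identifies the two evolutions.

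\textbf{Step 1 (the exact flow stays in the class).} Suppose at time $\tau$ we have $v^\tau=-\frac12x^\top\Pi x-p^\top x-c$ and $\pi_\tau(x,\cdot)=\cN(-Kx-k,\Sigma)$. By \eqref{eq:LQ-policy} the drift in \eqref{eq:u-dynamics-r} is affine in $u$ with matrix $-R$, so the Langevin iterate at each $x$ is an Ornstein--Uhlenbeck process. Its law remains Gaussian with mean $m=-Kx-k$ and covariance solving $\dot m=-Rm-r-C^\top(\Pi x+p)$ and $\dot\Sigma=2\lambda I-R\Sigma-\Sigma R$. Matching coefficients in $x$ gives the $K,k,\Sigma$ lines of \eqref{eq:LQ-langevin}.

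For the value, I compute $g^\tau$ in closed form by Gaussian integration. The terms $-\frac12\E[u^\top Ru]$ and $-\E[u]^\top(C^\top(\Pi x+p)+r)$ together with the entropy $\frac12\ln((2\pi e)^n\det\Sigma)$ make $g^\tau-\beta v^\tau$ a quadratic in $x$. By \eqref{eq:v-dynamics-r} and the shift identity \eqref{eq:shift-identity}, $\partial_\tau v^\tau=g^\tau-\beta v^\tau$, so $v^\tau$ stays quadratic with coefficients obeying \eqref{eq:lyapunov}--\eqref{eq:LQ-constant-flow}. A subtlety is that the $v^\tau$ of Definition \ref{def:shift} is defined as a value, not as a solution of \eqref{eq:v-dynamics-r}. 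I would check that the quadratic ODE solution actually equals that value. To do this, write $v^\tau(0,\cdot)$ through the backward Kolmogorov/Feynman--Kac equation for the time-inhomogeneous affine Gaussian policy $\pi^\tau_t$, verify that the quadratic ansatz solves it, and invoke uniqueness within quadratic-growth functions. This uniqueness is where $\beta>2\alpha_0$ enters: the tail policy $\pi_0$ applies for $t>\tau$, so second moments grow at most like $e^{(2\alpha_0+\epsilon)t}$ and are killed by $e^{-\beta t}$. That gives transversality and finiteness of $v^\tau$ for every $\tau$.

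\textbf{Step 2 (projections are exact).} For the critic, the tangent space of $v_\theta$ is all quadratics in $x$, and $\cE^\phi_\theta$ is quadratic. Since $\rho$ has finite fourth moment, the least-squares problem \eqref{eq:projected-flow} is well defined. A nonsingular $G_\theta$ then forces the unique minimizer to be the coefficient velocity with zero residual, which reproduces \eqref{eq:lyapunov}--\eqref{eq:LQ-constant-flow}.

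For the actor, $s_\phi(x,T_\phi)=-L^{-\top}\xi$, so $\bV_{\theta,\phi}(x,T_\phi(x,\xi))=R(K x+k-L\xi)-r-C^\top(\Pi x+p)+\lambda L^{-\top}\xi$. This is affine in $(x,\xi)$ and is realized exactly by $\partial_\phi T_\phi h$ with $\dot K=C^\top\Pi-RK$, $\dot k=C^\top p+r-Rk$ and $\dot L=\lambda L^{-\top}-RL$. A nonsingular $G_T$ makes this the unique minimizer. The induced covariance satisfies $\dot\Sigma=\dot LL^\top+L\dot L^\top=2\lambda I-R\Sigma-\Sigma R$, which is the Ornstein--Uhlenbeck covariance of Step 1. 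The laws therefore agree even though $L$ itself is not unique, since only $\Sigma$ matters.

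\textbf{Step 3 (conclusion and the Euler schemes).} Both evolutions solve the same ODE in $(\Pi,p,c,K,k,\Sigma)$ from the same initial data, so they coincide on their common existence interval. The linear $K,k,\Sigma$ equations give global existence, with $\Sigma\succ0$ preserved, and then the linear $\Pi,p,c$ equations do as well.

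For the discrete statement, the same two projection-exactness facts hold at each step. With exact action expectations, an explicit Euler step of \eqref{eq:joint-flow} equals the Euler step of the coefficient ODE in the same coordinates and update order. The coordinates matter because Euler in $L$ versus $\Sigma$ differs at order $\Delta\tau^2$, which is why the hypothesis fixes the coordinates.

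The main obstacle is the identification in Step 1 of the shifted-policy value of Definition \ref{def:shift} with the solution of the quadratic ODE. This requires the moment and transversality estimates under $\beta>2\alpha_0$, which must hold uniformly as the policy history changes. The algebra of the projections themselves is routine.
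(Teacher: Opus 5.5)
Your proposal follows the paper's route. You show both projections are exact on the affine-Gaussian/quadratic class: your $\bV_{\theta,\phi}(x,T_\phi)$ and the Gaussian integration of $g_\phi$ do reproduce \eqref{eq:lyapunov}--\eqref{eq:LQ-langevin}. You then show the pointwise iteration preserves that class, and conclude by uniqueness for the locally Lipschitz coefficient ODE. The paper simply cites \cite{feng2026continuous} for class preservation, so your check that the quadratic ODE solution equals the shift-defined value of Definition \ref{def:shift} is a genuine addition.

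One sentence in Step 3 is wrong, although the proposition does not need it. You cannot get global existence by solving the $K,k,\Sigma$ equations first and the $\Pi,p,c$ equations afterwards. The equations $\dot K=C^\top\Pi-RK$ and $\dot k=C^\top p+r-Rk$ are driven by the critic, and the $\Pi$-equation contains $K^\top RK-K^\top C^\top\Pi-\Pi CK$. Only the $\Sigma$ (or $L$) equation decouples; the $(K,\Pi)$ subsystem is a coupled quadratic ODE. The proposition, like the paper's proof, only asserts coincidence on the common existence interval, so you can simply drop the sentence. If you want global existence, you need an a priori bound. For instance, when $V$ is given by the stabilizing root $\Pi^*$ of \eqref{eq:ARE}, the sandwich $v^0\le v^\tau\le V$ (monotone improvement as in Lemma \ref{lemma:fisher}, plus optimality of $V$) gives $\Pi^*\preceq\Pi_\tau\preceq\Pi_0$. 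After that, the remaining equations are linear with bounded coefficients.

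Also, the obstacle you single out is lighter than you expect. By the Markov property at $t=\tau$,
$v^\tau(x)=\E_x\big[\int_0^\tau e^{-\beta t}r_{\pi_{\tau-t}}(X_t)\,dt+e^{-\beta\tau}v^0(X_\tau)\big]$, with $X$ driven by $\pi^\tau$. So the identification is a finite-horizon Feynman--Kac statement for a linear SDE with quadratic terminal datum. The condition $\beta>2\alpha_0$ enters only once, to identify $v^0=J(\cdot,\pi_0)$ with the quadratic Lyapunov solution, and no estimate uniform over policy histories is needed.
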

\begin{proof}
Since the actor velocity belongs to the sampler's tangent space and the critic residual is quadratic, the nonsingular Gram matrices therefore recover their coefficients exactly in \eqref{eq:lyapunov}--\eqref{eq:LQ-langevin}. The pointwise continuous iteration in \cite{feng2026continuous} preserves the same classes and solves the same locally Lipschitz coefficient system. Thus the uniqueness of its solution implies that the two iterations coincide. The claim for the Euler maps follows similarly.
\end{proof}

\section{Numerical experiments}\label{sec: numerics}
In this section, we test the accuracy of our derivative accuracy in three numerical experiments, featuring a high-dimensional state, learning of a bimodal control law, and coupled actor--critic learning at $d=n=16$, respectively. All three examples use the known control-affine model
\begin{equation}\label{eq:affine-setting}
b(x,u)=b_0(x)+B(x)u,\qquad \sigma(x,u)=\sigma(x),\qquad f(x,u)=f_0(x)-c(u).
\end{equation}
We write $z_v(x)=B(x)^\top\nabla_xv(x)$ for the \emph{control-relevant value gradient}. 

The updates use Gram projections and derivative corrections are specified in Section~\ref{sec: num flows}. All runs use fixed update budgets. Relative errors below are empirical root mean square errors divided by the reference root mean square. For a multivariate policy, TV is the mean total variation distance between corresponding one-dimensional control marginals, not the total variation distance between joint laws. Learners use sampled states while reference calculations and control integrals use grids or quadrature.

\subsection{Interacting double wells with a trainable critic}\label{sec: num flows}
This experiment tests the value projection and its derivative corrections at $d=100$ and $n=6$. The control policy is the Gibbs law determined by the current critic, so there is no separately parametrized actor. We use
\[
c(u)=\sum_{i=1}^{6}\frac{\kappa}{4}(u_i^2-a^2)^2
+\frac{\gamma}{2}\Bigl(\sum_{i=1}^{6}u_i\Bigr)^2,
\qquad(\kappa,a,\gamma)=(2,1,0.3),\quad
(\lambda,\beta,s)=(0.35,0.5,0.5).
\]
The quartic terms create two wells in each control coordinate, while the last term couples the coordinates. The next lemma shows that we can construct any desired optimal value by choosing appropriate $f_0$.

\begin{lemma}\label{lemma:manufactured}
For a sufficiently smooth candidate $\bar v$, set $\bar z=B^\top\nabla_x\bar v$ and
\begin{equation}\label{eq:manufactured}
f_0(x)=\beta\bar v(x)-(Ax)\cdot\nabla_x\bar v(x)
-\tfrac{s^2}{2}\Delta\bar v(x)-\lambda\ln Z(\bar z(x)).
\end{equation}
If $Z(\bar z(x)) =\int_{\mathbb R^6}\exp\!\left(\frac{-c(u)+\bar z\cdot u}{\lambda}\right)du$ is finite, then $\bar v$ satisfies the relaxed HJB equation with Gibbs policy $\pi_{\bar z(x)} = \pi_z(u)=Z(z)^{-1}\exp\!\left(\frac{-c(u)+z\cdot u}{\lambda}\right)$. Under the admissibility, integrability and discounted transversality conditions (similar to Proposition \ref{prop:two-residuals}(iii)), this policy is optimal and $V=\bar v$.
\end{lemma}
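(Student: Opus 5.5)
The plan is to substitute $\bar v$ into the relaxed HJB equation \eqref{eq:HJB-relaxed} and check that it vanishes identically, then appeal to a verification argument of the type used in Proposition \ref{prop:two-residuals}(iii).

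\emph{Step 1 (the log-partition term).} In the setting \eqref{eq:affine-setting} with $b_0(x)=Ax$, $B(x)=B$ and $\sigma=sI$, the Hamiltonian at $\bar v$ is
\[
\bH(x,u,\bar v_x,\bar v_{xx})=f_0(x)+(Ax)\cdot\nabla_x\bar v+\tfrac{s^2}{2}\Delta\bar v-c(u)+\bar z(x)\cdot u,
\]
using $(Bu)\cdot\nabla_x\bar v=u\cdot B^\top\nabla_x\bar v=u\cdot\bar z$. Only the last two terms depend on $u$. Factoring out the $u$-independent part gives
\[
\lambda\ln\int_U e^{\bH/\lambda}du=f_0(x)+(Ax)\cdot\nabla_x\bar v+\tfrac{s^2}{2}\Delta\bar v+\lambda\ln Z(\bar z(x)).
\]
This step uses the finiteness of $Z(\bar z(x))$.

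\emph{Step 2 (the HJB equation and the Gibbs policy).} Inserting the definition \eqref{eq:manufactured} of $f_0$ makes the right-hand side equal to $\beta\bar v(x)$, so $\cR_{\bar v}\equiv0$ in the notation of \eqref{eq:residuals}. By \eqref{eq:gibbs}, the $u$-independent factors cancel in $\Gamma[\bar v]$, which leaves $\Gamma[\bar v](x,\cdot)=\pi_{\bar z(x)}$.

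\emph{Step 3 (optimality).} This is the main obstacle, since it needs a verification theorem rather than algebra. For any admissible $\pi\in\cA(x)$, apply It\^o's formula to $e^{-\beta t}\bar v(X_t)$ under $\pi$, localized with stopping times. The HJB supremum gives pointwise
\[
\int_U(\bH-\lambda\ln\pi_t)\pi_t\,du\le\beta\bar v,
\]
and this yields
\[
\bar v(x)\ge \E\int_0^T e^{-\beta t}(\text{running reward})\,dt+e^{-\beta T}\E\,\bar v(X_T).
\]
Under the assumed square integrability, the stochastic integral is a true martingale. Under transversality, the terminal term vanishes as $T\to\infty$, so $\bar v\ge J(x,\pi)$. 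For $\pi=\pi_{\bar z}$, equality holds pointwise, because the Gibbs law attains the supremum (equivalently $\cE=\cR-\lambda\mathrm{KL}=0$ by \eqref{eq:R-E-KL}). Hence $J(x,\pi_{\bar z})=\bar v(x)$, which gives $V=\bar v$ and shows the policy is optimal.

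The delicate points are the localization and the transversality limit. These are exactly the admissibility, integrability and transversality conditions assumed in the statement, so I would invoke them directly, as in the derivation of \eqref{eq:J-minus-v}.
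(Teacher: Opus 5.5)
Your proposal is correct and follows essentially the same route as the paper. The $u$-independent part of $\bH$ factors out, so $\lambda\ln\int_U e^{\bH/\lambda}du$ collapses to $\beta\bar v$ by the choice of $f_0$, and the Gibbs law is $\pi_{\bar z}$; optimality then follows by verification, which the paper only calls standard and you spell out via It\^o's formula, noting that the transversality limit must hold for every competing $\pi\in\cA(x)$ (not only along $\pi_{\bar z}$ as in Proposition \ref{prop:two-residuals}(iii)) to get $\bar v\ge J(x,\pi)$.
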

\begin{proof}
For the uncontrolled diffusion, the control dependence of the Hamiltonian is $-c(u)+\bar z\cdot u$ and the Gibbs density is $\pi_{\bar z}$. The Gibbs variational identity turns its maximum into $\lambda\ln Z(\bar z)$. Algebraic calculation verifies that $\mathbb E_{\pi_{\bar z}}[\bH(x,u,\bar v_x,\bar v_{xx})-\lambda \ln\pi_{\bar z}] = \beta \bar v$. Then, since the Gibbs measure $\pi_{\bar z}$ satisfies the first order condition in the HJB equation,
$$\sup_{\pi\in\mathcal P(U)}\mathbb E_{\pi}[\bH(x,u,\bar v_x,\bar v_{xx})-\lambda \ln\pi] = \beta \bar v,$$
and $\bar v$ solves the HJB equation. The verification follows from standard argument.
\end{proof}
We take $b_0(x)=Ax$, $B$ constant, $\sigma=sI_d$, and construct a reference value with two-feature interactions:
\[
\begin{aligned}
A&=-0.6I+\tfrac12S_{\rm sk},\qquad P=I+0.3L^\top L/d,\\
\bar v(x)&=-\tfrac12x^\top Px+
\sum_{j=1}^{8}\varsigma_j
\tanh(w_j^\top x+b_j)\tanh(r_j^\top x+c_j).
\end{aligned}
\]
Here $S_{\rm sk}$ is skew-symmetric, $L\in\mathbb R^{32\times d}$, and the sixteen directions $\{w_j,r_j\}_{j=1}^8$ are orthonormal. Each interaction has mixed state derivatives, whereas the critic below uses a sum of single-feature activations. 

Lemma \ref{lemma:manufactured} implies that with value gradient $\bar z(x)=B^\top\nabla_x\bar v(x)$,
\begin{equation}\label{eq:manufactured}
f_0(x)=\beta\bar v(x)-(Ax)\cdot\nabla_x\bar v(x)
-\tfrac{s^2}{2}\Delta\bar v(x)-\lambda\ln Z(\bar z(x)).
\end{equation}
makes $\bar v$ satisfy the HJB equation and the reference optimal policy is $\pi_{\bar z(x)}$ (the conditions for verification is satisfied by the bounded nonlinear interactions which preserve the quadratic tails).

The critic family is
\[
\begin{aligned}
v_\theta(x)&=a_0+a^\top x+\tfrac12x^\top Q_\theta x
+\sum_{k=1}^{64}c_k\tanh(W_k^\top x+q_k),\\
Q_\theta&=\operatorname{diag}(\delta)+\tfrac12(M^\top K+K^\top M),
\qquad M,K\in\mathbb R^{24\times d}.
\end{aligned}
\]
Amplitudes, hidden directions and biases are trained and no information about the reference direction is embedded in the critic. This family has $11\,529$ parameters at $d=100$. It starts at $v_\theta=0$. The Gibbs policy at each step is $\pi_{z_{v_\theta}(x)}$, with $\ln Z$, moments and control marginals evaluated by structured quadrature.

We compare the results with method V, the projected value updates with those from method V$X$, which adds a gradient-residual projection, and V$XX$, which adds gradient- and Laplacian-residual projections after each value step, as described in Section \ref{sec: three flows}. Their derivative weights in \eqref{eq:weights} are $(\alpha_1,\alpha_2)=(10,0)$ and $(10,0.3)$, respectively. Method R is a separate damped Gauss--Newton minimization of the HJB residual. All methods use the same reference-based pool of $3072$ occupation states and $1024$ radius-matched Gaussian states, $256$-state batches, and $80$ updates with $\Delta\tau=0.2$. The damping is $10^{-2}$ times the initial mean batch-kernel eigenvalue. Monitoring uses $400$ states and final evaluation uses $600$ independent states.

\begin{table}[!htbp]
\centering\small\setlength{\tabcolsep}{5pt}
\begin{tabular}{lrrrr}
Method & Marginal TV & Value-gradient error & Value error & HJB RMS\\\hline
R   & $0.06115$ & $29.95\%$ & $5.43\%$ & $1.025$\\
V   & $0.05580$ & $27.62\%$ & $4.98\%$ & $0.950$\\
V$X$  & $0.00935$ & $4.74\%$ & $16.39\%$ & $0.874$\\
V$XX$ & $0.00935$ & $4.74\%$ & $16.64\%$ & $0.887$
\end{tabular}
\caption{Trainable-critic results after $80$ updates on $600$ independent states. The value-gradient error compares $B^\top\nabla v_\theta$ with $\bar z$. TV averages the six control marginals.}\label{tab:flows}
\end{table}
Table \ref{tab:flows} shows that the derivative corrections substantially improve the control-relevant gradient and the marginal policies, although their value error is larger than V's at this update budget. On the fixed $400$-state monitoring set, V$X$ marginal TV falls from $0.03237$ at update $5$ to $0.00935$ at update $80$; V$XX$ follows closely. Figure~\ref{fig:5-1-marginals} shows all six marginals at a predetermined state, and Figure~\ref{fig:5-1-convergence} shows the monitoring curves.

\begin{figure}[!htbp]
\centering\includegraphics[width=\linewidth]{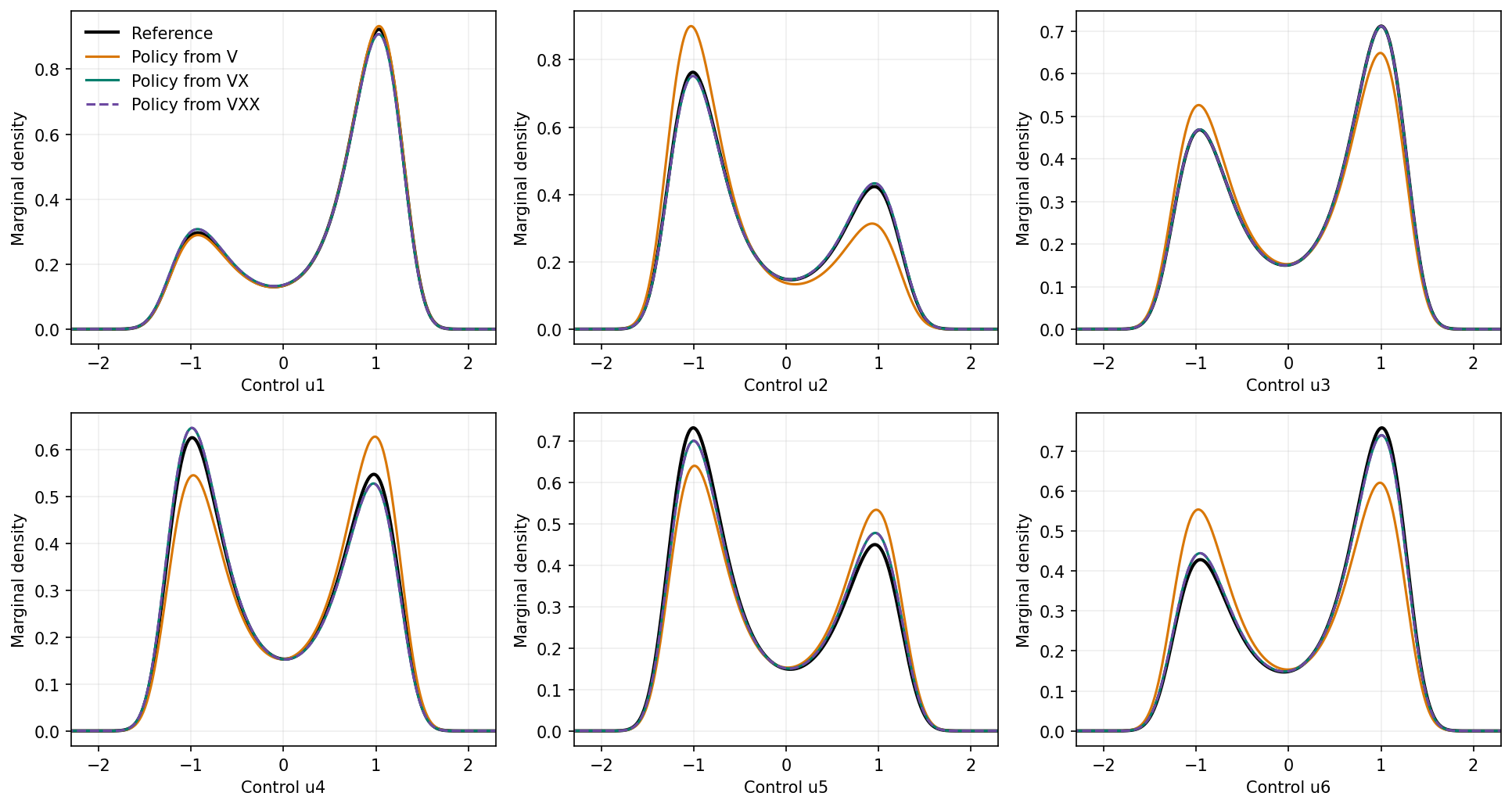}
\caption{Six control marginals at a predeterminted state. Black is the optimal reference and the colored curves use Gibbs policies from the trained V, V$X$ and V$XX$ critics.}\label{fig:5-1-marginals}
\end{figure}
\begin{figure}[!htbp]
\centering\includegraphics[width=\linewidth]{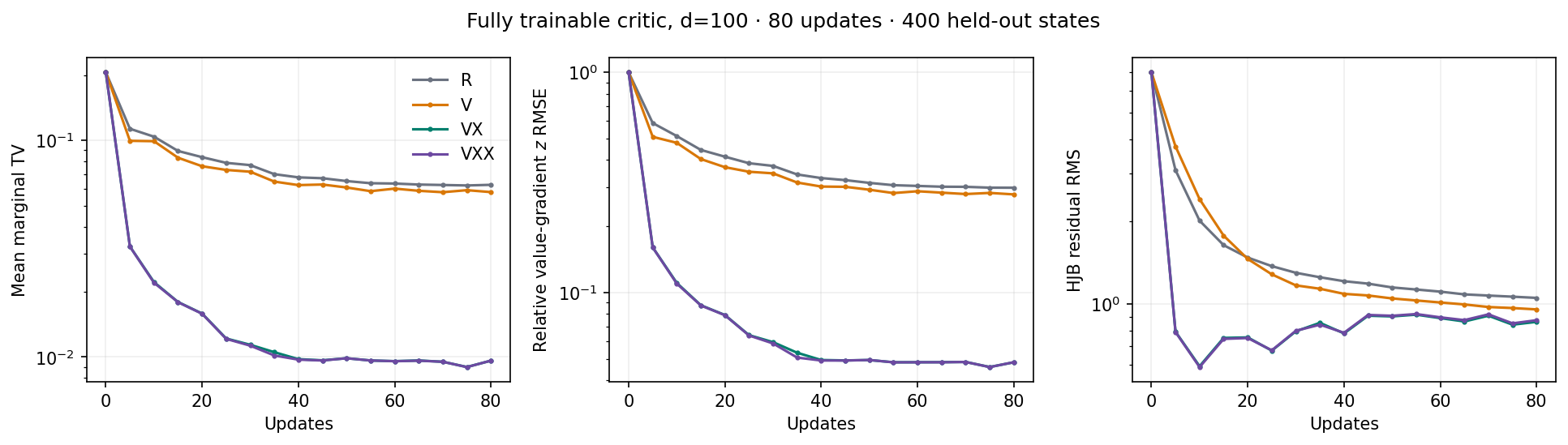}
\caption{Mean marginal TV, relative control-relevant value-gradient error and HJB-residual RMS on the $400$ monitoring states.}\label{fig:5-1-convergence}
\end{figure}

\subsection{Learning a bimodal control law}\label{sec: num DW}
In this example we test the our algorithm in a one-dimensional model ($d=n=1$) where we learn the double-well control with our actor projection iteration.
\[
b(x,u)=-x+0.5u,\qquad \sigma(x)=0.5x,\qquad
f(x,u)=-\tfrac12x^2-0.3x-(u^2-1)^2,\qquad
(\lambda,\beta)=(0.25,0.5).
\]
A numerical Markov-chain with $121$ nodes in $[-3,3]$, boundary reflection and control quadrature are used to calculate the reference. The same state chain supplies policy values and occupation weights. The reflection is only used for numerical reference, and is not used in the the control iteration. Policy values are averaged under $\mu$, uniform on $101$ nodes in $[-2.5,2.5]$.

The critic is a degree-six Legendre expansion,
\[
v_\theta(x)=\sum_{j=0}^{6}\theta_j\psi_j(x),\qquad
\psi_j(x)=\sqrt{2j+1}\,P_j(x/2.6),\qquad
z_{v_\theta}(x)=0.5v_\theta'(x).
\]
We compare a Gaussian actor with cubic mean, with only the first two terms in the following, and the monotone actor, by adding the third term: 
\[
T_\phi(x,\xi)=\alpha_0(x)+e^{\alpha_1(x)}\xi+
\sum_{k=1}^{3}e^{w_k(x)}\tanh\!\left(\frac{\xi-\ell_k(x)}{0.15}\right),
\qquad \xi\sim\mathcal N(0,1).
\]
Its eight coefficient functions are cubic in $x$ (32 parameters). Since $\partial_\xi T_\phi>0$, its density, score and entropy follow by change of variables. 

We perform two experiments in this model. In the first, similarly Section \ref{sec: num flows}, the critic is iterated using its implcit Gibbs, with $3000$ projected value updates of size $0.05$. Then with this critic frozen, the Gaussian and monotone actors take $6000$ and $20\,000$ projected Langevin updates of size $0.01$, using their current finite-chain occupation laws and deterministic control-noise quadrature.

\begin{table}[!htbp]
\centering\small
\begin{tabular}{lrr}
Policy & TV at $x=-0.4$ & $J_\mu$\\\hline
Gaussian actor & $0.510$ & $-0.818$\\
Monotone actor & $0.094$ & $-0.452$\\
Critic Gibbs law & $0.005$ & $-0.433$
\end{tabular}
\caption{Frozen-critic policies compared with the discrete reference, whose value is $J_\mu=-0.4328$. TV integrates the conditional density discrepancy on the control grid.}\label{tab:bimodal-frozen}
\end{table}
Table \ref{tab:bimodal-frozen} and Figure \ref{fig:5-2-conditional} show the results at the state $x=0.4$. The Gaussian concentrates in one well at $x=-0.4$, while the monotone actor covers both. The critic Gibbs law provides a separate implicit-policy comparison. Figure~\ref{fig:5-2-conditional} displays these conditional densities across states. The finite grid and reflecting boundaries also affect the reference, so the table measures agreement with this numerical benchmark.

In the second experiment, we first fit the monotone actor to the symmetric law corresponding to zero control-relevant value gradient for $2000$ updates of size $0.01$. Starting from that actor and a zero critic, we then run $2000$ coupled actor-then-critic updates of size $0.01$. At $x=1.5$, TV between the actor and its moving critic-Gibbs target rises from $0.093$ to $0.440$ before falling to $0.089$; $J_\mu$ improves from $-0.530$ to $-0.454$. The final critic's maximum discrepancy from the discrete reference on $[-2.5,2.5]$ is $0.059$. Figure~\ref{fig:5-2-coupled} shows the changing target and value.

Finally, as a separate check, exact policy evaluation replaces the critic at each actor step with that actor's value on the finite chain, which separates actor-family limitations from critic-fitting error on the chosen discretization. With step size $0.01$, the Gaussian and monotone policies reach $J_\mu=-0.818$ and $-0.453$ after $1500$ and $2500$ steps, respectively.

\begin{figure}[!htbp]
\centering\includegraphics[width=\linewidth]{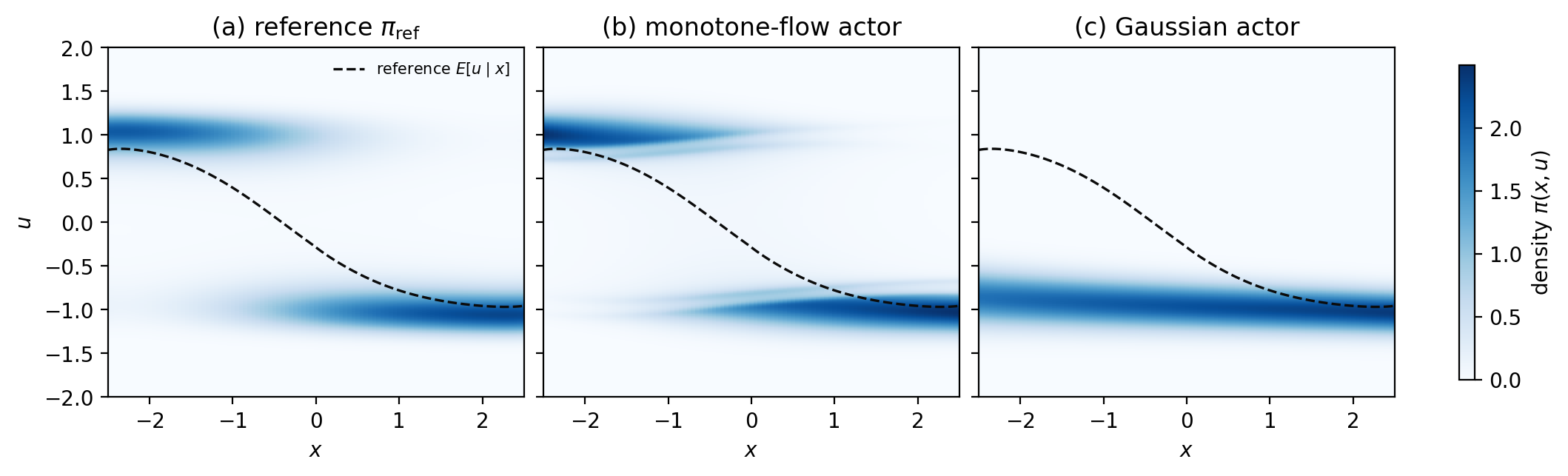}
\caption{Reference, monotone-flow and Gaussian conditional densities. The dashed curve in each panel is the reference conditional mean.}\label{fig:5-2-conditional}
\end{figure}
\begin{figure}[!htbp]
\centering\includegraphics[width=\linewidth]{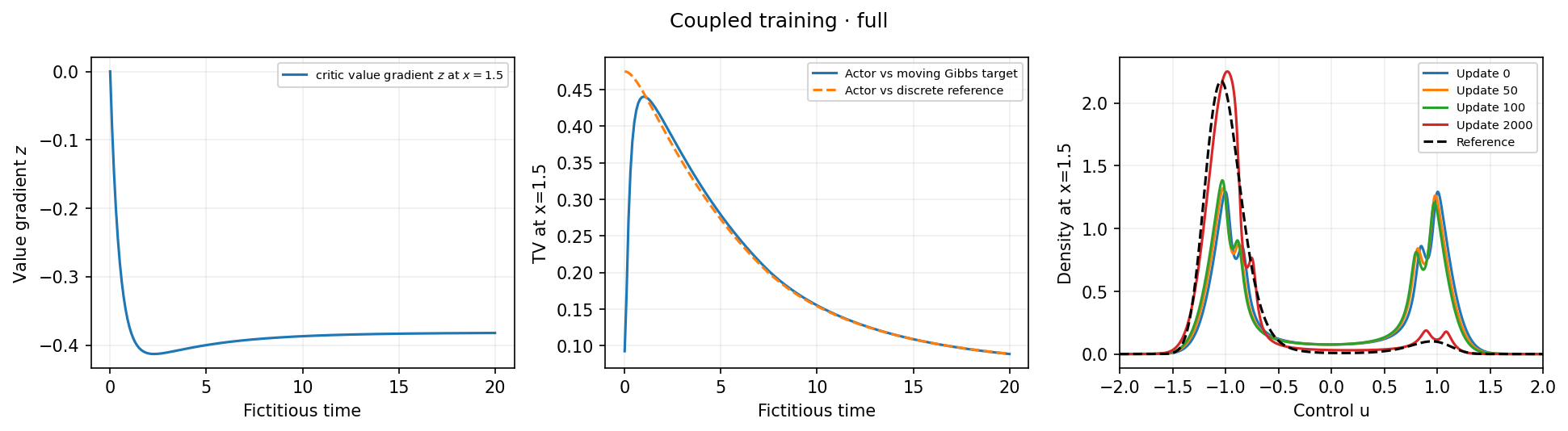}
\caption{Coupled monotone run at $x=1.5$: the critic's control-relevant value gradient, actor TV from its moving Gibbs target and the discrete reference, and the policy density updates}\label{fig:5-2-coupled}
\end{figure}
\subsection{Coupled learning of sixteen double-well controls}\label{sec: num B}
In this example we take $d=n=16$ and a double-well penalty in every control coordinate. Let $O$ be orthogonal, $A=O\operatorname{diag}(a)O^\top$, $Q=O\operatorname{diag}(q)O^\top$, and
\[
b(x,u)=Ax+Ou,\qquad \sigma=0.5I_d,\qquad
f(x,u)=-\tfrac12x^\top Qx-\sum_{i=1}^{16}(u_i^2-1)^2.
\]
The construction draws $a_i\in[-1.5,-0.5]$ and $q_i\in[0.5,1.5]$ independently and uses $(\lambda,\beta)=(0.3,0.5)$. The learner receives $A,Q$ and calls to the drift $b(x,u)$, but not $O$ or the rotated state $O^\top x$. The rotation is used only to compute the reference.

In reference coordinates $y=O^\top x$, the sixteen scalar HJB equations separate:
\begin{equation}\label{eq:1d-hjb}
\begin{aligned}
\beta w_i(y)&=-\tfrac12q_i y^2+a_i y w_i'(y)+\tfrac18w_i''(y)+F(w_i'(y)),\\
F(z)&=\lambda\ln\int_{\mathbb R}
\exp\!\left(\frac{-(u^2-1)^2+zu}{\lambda}\right)du.
\end{aligned}
\end{equation}
We numerically solve each equation on $1201$ nodes in $[-6,6]$. Thus $v_{\rm ref}(x)=\sum_iw_i((O^\top x)_i)$ and the reference policy is the product of the scalar Gibbs densities associated with $z_{{\rm ref},i}(x)=w_i'((O^\top x)_i)$.

The critic is a quadratic plus $96$ trainable tanh units,
\[
v_\theta(x)=-\tfrac12x^\top\Pi_\theta x-p_\theta^\top x-c_\theta
+\sum_{j=1}^{96}a_j\tanh(w_j^\top x+b_j),
\]
with $1881$ parameters. The actor transports independent Gaussian noise coordinatewise:
\[
U_i=T_{\phi,i}(x,\xi_i)=m_i(x)+e^{s_i(x)}\xi_i+
\sum_{k=1}^{3}e^{\alpha_{ik}(x)}
\tanh\!\left(\frac{\xi_i-\ell_{ik}(x)}{0.35}\right),
\qquad \xi_i\overset{\rm iid}{\sim}\mathcal N(0,1).
\]
Each of its eight coefficient functions per coordinate is linear in
\[
\bigl(1,\tanh(x_1/2.5),\ldots,\tanh(x_{16}/2.5)\bigr),
\]
giving $2176$ actor parameters. It contains no built-in double-well density. Monotonicity in $\xi_i$ supplies the actor density and score, and Gaussian-noise quadrature supplies the actor and critic expectations.

For the actor update, the required critic value-gradient component is obtained from the known drift through
\[
z_{v_\theta,i}(x)=\nabla_xv_\theta(x)\cdot\bigl[b(x,e_i)-b(x,0)\bigr],
\]
which is exactly the $i$th component of $B^\top\nabla_xv_\theta$ for an affine drift and the training code never reads $O$ explicitly. The critic starts at the exact value of the initial actor, which is not the optimal value. Then actor and critic take $2000$ coupled, actor-first updates with $\Delta\tau=0.1$, with actor damping $0.1$ and critic damping $0.01$. Each step uses $256$ states from a $1024$-state pool simulated under the current actor and refreshed every $50$ updates. 

\begin{table}[!htbp]
\centering\small
\begin{tabular}{lr}
Held-out quantity & Error\\\hline
Value $v_\theta$ vs.\ $v_{\rm ref}$, relative RMS & $37.9\%$\\
Critic value gradient $B^\top\nabla v_\theta$, relative RMS & $8.57\%$\\
Mean control, RMS per coordinate & $0.0360$\\
Mean marginal TV, initial actor & $0.597$\\
Mean marginal TV, trained actor & $0.128$\\
Largest observed marginal TV, trained actor & $0.304$
\end{tabular}
\caption{The $2000$-update run evaluated on $200$ independent states from the learned actor's occupation law. TV compares corresponding one-dimensional marginals, not the joint policy. The value comparison includes actor suboptimality.}\label{tab:doublewell16-results}
\end{table}

Figure~\ref{fig:doublewell16-marginals} compares all sixteen marginals at the chosen nonzero state $x_0=O(0.8\cdot\mathbf1_{16})$. The trained actor learns two wells and their unequal masses without reference samples or rotated coordinates in training, but retains excess density between the wells. On the $200$ held-out occupation states, $v_\theta-v_{\rm ref}$ has mean $-0.828$ and centered RMS $0.047$; this level shift dominates the $37.9\%$ relative RMS value error. The mean joint Gibbs gap $\lambda\operatorname{KL}(\pi_\phi(\cdot\mid x)\Vert\pi_{\rm ref}(\cdot\mid x))$ is $0.401$, while the critic's mean policy-evaluation residual is $-0.032$. These diagnostics indicate that actor suboptimality accounts for most of the value gap.

\begin{figure}[!htbp]
\centering\includegraphics[width=\linewidth]{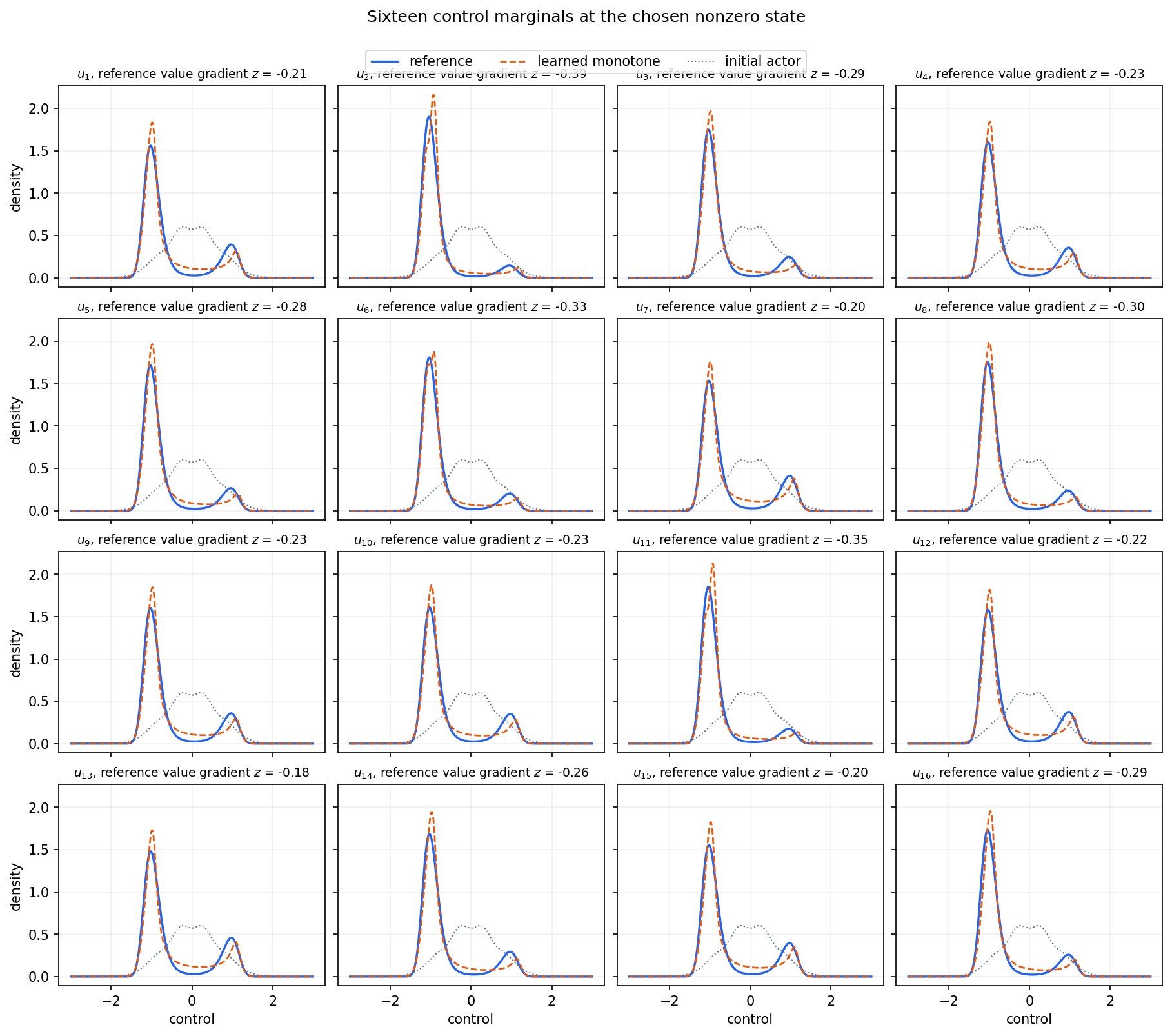}
\caption{Sixteen control marginals at $x_0=O(0.8\,\mathbf1)$: numerical reference, initial actor and learned monotone actor. }\label{fig:doublewell16-marginals}
\end{figure}
{\bibliographystyle{agsm}
\bibliography{ref}}

\end{document}